\numberwithin{equation}{section}
 \newtheorem{lemma}{Lemma}[section]
 \newtheorem{proposition}[lemma]{Proposition}
 \newtheorem{theorem}{Theorem}
 \theoremstyle{remark}
 \newtheorem{remark}{Remark}[section]
\begin{document}

\title{\bf Nonlinear Stability of Large Amplitude Viscous Shock Wave
for General Viscous Gas}
\author{{\bf Lin He}\\[1mm]College of Mathematics, Sichuan University,
	\\ Chengdu, 610065,
	China.\\E-mail address: lin\_he@scu.edu.cn\\[2mm]{\bf Feimin Huang}\\[1mm] School of Mathematical Sciences, University of Chinese Academy of Sciences, \\Beijing 100049, China\\[1mm]
	and \\[1mm]Academy of Mathematics and Systems Science,\\ Chinese Academy of Sciences, Beijing 100190, China.\\
E-mail address: fhuang@amt.ac.cn}
\date{}

\maketitle

\begin{abstract}
In the present paper, it is shown that the large amplitude viscous shock wave is nonlinearly stable for isentropic Navier-Stokes equations, in which the pressure could be general and includes $\gamma$-law, and the viscosity coefficient is a smooth function of density. The strength of shock wave could be arbitrarily large. The proof is given by introducing a new variable, which can formulate the original system into a new one,  and the elementary energy method introduced in \cite{Matsumura-Nishihara-1985-JJAM}.
\end{abstract}


\section{Introduction }
We consider the  1-D isentropic Navier-Stokes equations in Lagrangian coordinates, which reads  
\begin{equation}\label{1.1}
  \left\{ \begin{array} { l } { v _ { t } - u _ { x } = 0 }, \\
  { u _ { t } + p ( v ) _ { x } = \left( \frac { \mu ( v ) } { v } u _ { x } \right) _ { x } } , \end{array} \right.
\end{equation} 
with initial data 
\begin{equation}\label{initial}
(v , u) (x,0) = \left(v_ { 0 }, u_ { 0 } \right)(x) \rightarrow \left(v _ { \pm },u _ { \pm } \right) , ~ \text { as } ~ x \rightarrow \pm \infty,
\end{equation}
where $t>0, x\in\mathbb{R},$ and $v(x,t)=\frac{1}{\rho(x,t)}>0, u(x,t)$ denote respectively, the specific volume ($\rho$ is the density) and velocity, while $\mu(v)>0$ is the viscosity coefficient which is assumed to be a smooth function of $v$, and $p(v)>0$ is the smooth pressure satisfying
\begin{equation}\label{pressure}
  \begin{aligned}
    p' ( v )<0,\quad &p ''( v )\geq0,\quad p''(v)\nequiv0 ~\mbox{in any interval}.
  \end{aligned}
\end{equation}
Here the condition $p' ( v )<0$ is necessary to ensure that the corresponding inviscid part of \eqref{1.1} is strict hyperbolic and the $\gamma$-law pressure, i.e. $p(v)=v^{-\gamma}, \gamma\ge 1$, is included.  In view of the Chapman-Enskog expansion theory for rarefied gas dynamics (cf. \cite{Chapman-Cowling-1970,Grad-1963,Kawashima-Matsumura-Nishida-1979-CMP}), the viscosity is temperature-dependent, thus density-dependent in the isentropic flow \cite{LXY}, see also  \cite{Matsumura-Wang-2010-MAA}. 

The nonlinear stability of viscous shock wave is a fundamental problem in the theory of viscous conservation laws. Matsumura-Nishihara \cite{Matsumura-Nishihara-1985-JJAM} and  Goodman \cite{G} first studied the stability of viscous shock wave under a zero mass condition.  Since then, there have been  a large of literatures on this topic and various approaches and techniques like weighted characteristics, approximate Green functions and pointwise estiamtes are developed, see \cite{Liu-1985, Liu-1986, Liu-1997, Liu-Zeng-2009-CMP, Liu-Zeng-2015, K-M, S-X, HM, M-N-1994, Mascia-Zumbrun-2004-CPAM, Matsumura-Wang-2010-MAA, Vasseur-Yao-2016-CMS, Z} and references therein. It is noted that most of above works require the strength of shock wave is suitably small, that is, the shock is weak. From the physical piont of view, the strong shock (its strength is not necessarily small) is more meaningful. Along this direction, an interesting $L^1$ stability theory of large shock wave for scalar viscous conservation law was established in \cite{FS}. 
For $\gamma$-law pressure system, Matsumura-Nishihara \cite{Matsumura-Nishihara-1985-JJAM} showed that the viscous shock wave is stable if $(\gamma-1)|v_+-v_-|$ is small, that is,  when $\gamma\rightarrow1$, the strength of shock wave could  be large. This condition is later relaxed in \cite{K-M} to the condition that  $(\gamma-1)^2|v_+-v_-|$ is small, and finally removed in \cite{Matsumura-Wang-2010-MAA} and \cite{Vasseur-Yao-2016-CMS} when $\mu(v)=v^{-\alpha}, \alpha\ge 0$. Through different approach, Zumbrun et.al \cite{HM}, \cite{Mascia-Zumbrun-2004-CPAM}, \cite{Z} proved  the stability of large amplitude shock not only in 1-D but also in M-D.  

Motivated by \cite{Matsumura-Wang-2010-MAA} and \cite{Vasseur-Yao-2016-CMS}, we begin to investigate the stability of large amplitude shock wave of the system \eqref{1.1} for general pressure  $p(v)$ and viscosity $\mu{(v)}$, in which the pressure satisfies \eqref{pressure} and its second derivative $p''(v)$ could be zero somewhere, that is, the pressure is not necessary to be strictly convex. To achive the main purpose, we first establish the existence of large amplitude traveling wave of the system \eqref{1.1} by the method developed in \cite{Caflisch-1979-CPAM}. 
Then motivated by \cite{Bresch-Desjardins-2003-CMP,Bresch-Desjardins-2006-JMPA}, \cite{Shelukhin-1984-JAMM} and \cite{Vasseur-Yao-2016-CMS}, we introduce a new variable $h(x,t)$ and translate the original system \eqref{1.1} into a new system \eqref{1.9} whose first equation is parabolic, while the seond one is hyperbolic. This new system is specially designed for the large amplitude shock wave so that a bad term  is cancelled in the elementary energy estimates, see \eqref{3.6} below. Thus the large amplitue shock wave is shown nonlinearly stable.

We now formulate the main works of the present paper. 
We are looking for a traveling wave $( \tilde{v} , \tilde{u} ) ( x-st )$ satisfying $(\tilde{v} , \tilde{u}) (\xi) \rightarrow \left(v _ { \pm } ,u _ { \pm }\right) , \text { as }~  \xi \rightarrow \pm \infty$, denoted by  viscous shock wave solution,  to the system \eqref{1.1}, where $\xi = x-st$.  Then we obtain the following ordinary differential equations (ODE):
\begin{equation}\label{1.4}
\left\{ \begin{array} { l } { - s \tilde { v } _ { \xi } - \tilde { u } _ { \xi } = 0 },\\
{-s \tilde{u}_{\xi}+p(\tilde{v})_{\xi}=\left(\frac{\mu(\tilde{v})\tilde{u}_{\xi}} {\tilde{v}}\right)_{\xi}}, \\
{ ( \tilde { v } , \tilde { u } ) ( \pm \infty ) = \left( v _ { \pm } , u _ { \pm } \right) }. \end{array} \right.
\end{equation}
Integrating the system \eqref{1.4} with respect to $\xi$ over $(\pm\infty,\xi)$ and using the fact that
$\tilde{u}_{\xi} \rightarrow 0 $ as $\xi \rightarrow \pm \infty $, we get
\begin{equation}\label{1.5}
\left\{ \begin{array} { l } { s \tilde { v } + \tilde { u } = s v _ { \pm } + u _ { \pm } },\\
{ - s \tilde { u } + p ( \tilde { v } ) - \frac { \mu(\tilde { v })\tilde { u } _ { \xi } } { \tilde { v } } = - s u _ { \pm } + p \left( v _ { \pm } \right) }, \end{array} \right.
\end{equation}
and this implies the Rankine-Hugoniot jump condition for shock waves
\begin{equation}\label{1.6}
\left\{ \begin{array} { l } { - s \left( v _ { + } - v _ { - } \right) = \left( u _ { + } - u _ { - } \right) }, \\
{ - s \left( u _ { + } - u _ { - } \right) = - \left( p \left( v _ { + } \right) - p \left( v _ { - } \right) \right) }, \end{array} \right.
\end{equation}
from which we can deduce that
\begin{equation}\label{1.7}
s = \pm \sqrt { - \frac { p \left( v _ { + } \right) - p \left( v _ { - } \right) } { v _ { + } - v _ { - } } }.
\end{equation}
We will only deal with the case that $s<0$ (while the case $s>0$ is similar), and the so-called entropy condition is 
\begin{equation}\label{1.8}
v _ { - } > v _ { + } , \quad u _ { - } > u _ { + }.
\end{equation}
From \eqref{1.5}, we have that
\begin{equation}\label{1.13}
\tilde { v } _ { \xi } = \frac { \tilde { v }  } {- s\mu(\tilde { v }) } \left[ s ^ { 2 } \left(  \tilde { v }- v _ { - } \right) + p ( \tilde { v } ) - p \left( v _ { - } \right) \right] =: B(\tilde{v}).
\end{equation}
Then we obtain the existence of the traveling wave
\begin{theorem}[Existence of the viscous shock wave]\label{existence} Under the condition \eqref{pressure}, there exists a unique smooth solution $(\tilde{v},\tilde{u})(\xi)$ of the system \eqref{1.4} up to a shift. Moreover, $u'(\xi)\le 0$.
	
\end{theorem}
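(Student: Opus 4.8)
The plan is to reduce the whole system to the scalar autonomous ODE \eqref{1.13}, $\tilde v_\xi = B(\tilde v)$, and carry out a phase-line analysis, recovering $\tilde u$ at the end. First I would note that $\tilde u$ is slaved to $\tilde v$: the first equation of \eqref{1.5} gives $\tilde u = sv_\pm + u_\pm - s\tilde v$, so once $\tilde v$ is constructed, $\tilde u$ is determined, is as smooth as $\tilde v$, and satisfies $\tilde u_\xi = -s\,\tilde v_\xi$. Since $s<0$, the monotonicity claim $u'(\xi)\le 0$ will then follow at once from $\tilde v_\xi\le 0$. Hence the entire problem is to produce a monotone heteroclinic orbit of \eqref{1.13} connecting $v_-$ at $\xi=-\infty$ to $v_+$ at $\xi=+\infty$, unique up to translation.

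Write $B(v)=\dfrac{v}{-s\mu(v)}\,g(v)$ with $g(v):=s^2(v-v_-)+p(v)-p(v_-)$. Because $v>0$, $\mu(v)>0$ and $-s>0$, the prefactor is strictly positive, so $B$ and $g$ have the same sign and the same zeros. I would then establish three structural facts about $g$. First, $g(v_-)=0$ by construction, and $g(v_+)=0$ is exactly the Rankine–Hugoniot relation \eqref{1.7}, i.e. $s^2(v_+-v_-)+p(v_+)-p(v_-)=0$. Second, $g$ is convex, since $g''=p''\ge 0$ by \eqref{pressure}. Third, using that $p''\not\equiv 0$ on any interval, $g$ is not affine on any subinterval, so a convex function vanishing at the endpoints $v_+<v_-$ must satisfy $g<0$ strictly on the open interval $(v_+,v_-)$; moreover the endpoint slopes are strict, $g'(v_+)<0<g'(v_-)$, because $g'(v_-)=0$ would force $g\ge 0$ by convexity, contradicting $g<0$ just to the left (and symmetrically at $v_+$).

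With these facts the phase line is transparent: $v_\pm$ are the only equilibria in $[v_+,v_-]$, one has $B<0$ on the interior, and the linearizations $B'(v_-)=\frac{v_-}{-s\mu(v_-)}g'(v_-)>0$, $B'(v_+)=\frac{v_+}{-s\mu(v_+)}g'(v_+)<0$ show that $v_-$ is a repeller and $v_+$ an attractor. Fixing any $v_0\in(v_+,v_-)$ and solving \eqref{1.13} with $\tilde v(0)=v_0$, the solution stays trapped in $(v_+,v_-)$, hence positive, and is strictly decreasing; since $B$ vanishes only linearly at the endpoints, the equilibria are approached exponentially and attained only as $\xi\to\pm\infty$, so $\tilde v(\mp\infty)=v_\mp$. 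Uniqueness up to a shift is automatic for a scalar autonomous ODE, as any two such orbits differ by a translation in $\xi$. I expect the one delicate point to be the third structural fact: converting the merely non-strict convexity hypothesis ($p''\ge 0$ with $p''\not\equiv 0$ on intervals) into the strict sign $g<0$ on $(v_+,v_-)$ and the strict endpoint slopes, since this is precisely where strict convexity of the pressure is normally invoked; the remaining ODE arguments are routine.
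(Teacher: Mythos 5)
Your proposal is correct, and its skeleton is the same as the paper's: reduce everything to the scalar ODE \eqref{1.13}, with $\tilde u$ slaved to $\tilde v$ through \eqref{1.5}, show that $B$ vanishes on $[v_+,v_-]$ only at the endpoints, is negative in between, and has simple zeros there, and then produce the monotone heteroclinic connection (from which $u'\le 0$ follows identically in both arguments). Two points differ. First, the paper outsources the connection step to Caflisch's lemma (Lemma \ref{caflisch}), while you inline the equivalent phase-line analysis (trapping, monotone convergence to the only available equilibria, translation invariance of a scalar autonomous equation); same mathematics, just self-contained. Second --- the genuine difference --- is the proof that $f(\tilde v):=s^2(\tilde v-v_-)+p(\tilde v)-p(v_-)$ (your $g$; beware that this name collides with the paper's $g$ of \eqref{1.8-1}) has no zero in $(v_+,v_-)$ under the non-strict convexity hypothesis \eqref{pressure}. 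The paper argues via Rolle's theorem: an interior zero $v^*$ gives $f'(v_1)=f'(v_2)=0$ with $v_1<v^*<v_2$, monotonicity of $f'$ (since $f''=p''\ge 0$) forces $f\equiv 0$ on $[v_1,v_2]$, and an infimum/minimality argument then closes the contradiction. You use the chord geometry of convex functions instead: if $g(v^*)=0$ with $v_+<v^*<v_-$, then for $w\in(v_+,v^*)$, writing $v^*=\lambda w+(1-\lambda)v_-$, convexity gives $0=g(v^*)\le \lambda g(w)+(1-\lambda)g(v_-)=\lambda g(w)\le 0$, so $g\equiv 0$ on $[v_+,v^*]$, whence $p''\equiv 0$ on an interval, contradicting \eqref{pressure}. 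This two-line chord inequality is the only thing missing from your sketch --- you correctly flagged it as the delicate point --- and with it inserted your argument is complete. Your route also buys a bit more rigor at the endpoint slopes: the paper asserts $s^2+p'(v_+)<0<s^2+p'(v_-)$ as ``straightforward from the entropy condition \eqref{1.8}'', but the entropy condition alone does not give strictness; one needs non-affineness of $p$ on $[v_+,v_-]$ (as $-s^2$ is the chord slope of $p$), and your tangent-line argument (if $g'(v_-)\le 0$, then $g$ lies above its tangent at $v_-$, so $g\ge 0$, contradicting interior negativity) supplies exactly this.
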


Now we are ready to state our main theorem. Motivated by  \cite{Bresch-Desjardins-2003-CMP,Bresch-Desjardins-2006-JMPA}, \cite{Shelukhin-1984-JAMM} and \cite{Vasseur-Yao-2016-CMS}, we introduce the effective velocity $h = u - g(v)_x$, where
\begin{equation}\label{1.8-1}
g(z)=\int^{z}\frac{\mu(\eta)}{\eta}d\eta.
\end{equation}
Using the effective velocity, we can easily obtain the derivative estimates of density $\rho$. Indeed, Shelukhin \cite{Shelukhin-1984-JAMM} and Bresch-Desjardins \cite{Bresch-Desjardins-2003-CMP,Bresch-Desjardins-2006-JMPA} applied the new effective velocity $h$ to obtain the entropy estimates, see also  \cite{Vasseur-Yao-2016-CMS} for the stability of large shock wave of $\gamma$-law pressure. 

From \eqref{1.8-1}, it holds that 
\begin{equation}\label{1.9}
\left\{ \begin{array} { l } { v _ { t } - h _ { x } = \left( g'(v)v _ { x } \right) _ { x } }, \\
{ h _ { t } + p ( v ) _ { x } = 0 }, \end{array} \right.
\end{equation}
that is, the first equation of \eqref{1.9} is parabolic, while the second one becomes hyperbolic. Then the initial data \eqref{initial} is changed into 
\begin{equation}\label{1.10}
( v , h ) ( x , 0 ) = \left(v _ { 0 } ,h _ { 0 } \right) ( x ):=\left(v_{ 0 } ,u_ {0}-g'(v_0)v'_0\right)(x), 
\end{equation}
and the corresponding far field condition becomes
\begin{equation}\label{1.11}
( v , h ) ( x , t ) \rightarrow \left( v _ { \pm } , u _ { \pm } \right), ~ \text { as } ~x \rightarrow \pm \infty, ~t \geq 0.
\end{equation}

Let $\tilde{h}=\tilde { u } - g(\tilde{v})_x$,  then \eqref{1.4} is  equivalent to
\begin{equation}\label{1.12}
\left\{
\begin{array} { l }
{ -s\tilde{v}_{\xi}-\tilde{h}_{\xi}=\left( g'(\tilde{v})\tilde{v}_{\xi} \right)_{\xi} }, \\
{ -s\tilde{h}_{\xi}+p(\tilde{v})_{\xi}=0 } ,\\
{ (\tilde{v},\tilde{h}) (\pm\infty)=\left(v_{\pm},u_{\pm}\right) }.
\end{array}
\right.
\end{equation} 
Assume that
\begin{equation}\label{1.14}
(v_{0}-\tilde{v},h_{0}-\tilde{h}) \in H^{2} \cap L^{1} , \quad \inf_{x\in R^{1}}v_{0}(x)>0,  
\end{equation}
and
\begin{equation}\label{1.14-1}
\int\left(v_{0}-\tilde{v}\right) (x)dx=0, \quad \int\left(h_{0}-\tilde{h}\right) (x)dx=0.
\end{equation}
Based on \eqref{1.14-1}, we can define 
\begin{equation}\label{1.15}
\Phi_{0}(x)=\int_{-\infty}^{x}\left[v_{0}(z)-\tilde{v}(z)\right] dz, \quad \Psi_{0} (x)=\int_{-\infty}^{x} \left[h_{0}(z)-\tilde{h}(z)\right] dz.
\end{equation}
We further assume that
\begin{equation}\label{1.16}
\left(\Phi_{0},\Psi_{0}\right)(x)\in L^{2}.
\end{equation}
It is worthy to point out that the condition \eqref{1.14-1} can be relaxed to that there exists a shift $\alpha$ such that  
\begin{equation}\label{1.14-2}
\int\left(v_{0}(x)-\tilde{v}(x+\alpha)\right)dx=0, \quad \int\left(h_{0}(x)-\tilde{h}(x+\alpha)\right)dx=0.
\end{equation}
The conditon \eqref{1.14-2} is usually called by zero mass condition. The condition \eqref{1.14-1} corresponds to the case $\alpha=0$ in \eqref{1.14-2}. The main result of the present paper is 
\begin{theorem}[Main Theorem]\label{th1}
	Under the assumptions \eqref{1.14}, \eqref{1.14-1} and \eqref{1.16}, there exists a positive constant $\delta_0$ such that if $\|(\Phi_0,\Psi_0)\|_3\leq\delta_0$, then the Cauchy problem \eqref{1.1} has a unique global solution $(v,u)(x,t)$ satisfying
	\begin{equation}\label{1.18}
	\begin{array} { l }
	{ ( v - \tilde { v } , u - \tilde { u } ) \in C ^ { 0 } \left( [ 0 , + \infty ) ; H ^ { 2 } \right) , ~
		v - \tilde { v } \in L ^ { 2 } \left( [ 0 , + \infty ) ; H ^ { 3 } \right) } \\
	{ u - \tilde { u } \in L ^ { 2 } \left( [ 0 , + \infty ) ; H ^ { 2 } \right) },
	\end{array}
	\end{equation}
	and
	\begin{equation}\label{1.19}
	\sup _ { x \in R } | v ( x , t ) - \tilde { v } ( x - s t ) | \rightarrow 0 , \quad
	\sup _ { x \in R } | u ( x , t ) - \tilde { u } ( x - s t ) | \rightarrow 0 , ~ \text { as } t \rightarrow \infty.
	\end{equation}
\end{theorem}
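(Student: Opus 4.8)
The plan is to work in the shock frame and apply the anti-derivative method of Matsumura--Nishihara \cite{Matsumura-Nishihara-1985-JJAM} to the reformulated system \eqref{1.9}. Passing to $\xi=x-st$, $\tau=t$, I set the perturbation $(\phi,\psi)=(v-\tilde v,\,h-\tilde h)$ and, using the zero--mass condition \eqref{1.14-1}, its anti-derivative $(\Phi,\Psi)(\xi,\tau)=\int_{-\infty}^{\xi}(\phi,\psi)(z,\tau)\,dz$, so that $(\Phi_\xi,\Psi_\xi)=(\phi,\psi)$ and $(\Phi,\Psi)(\cdot,0)=(\Phi_0,\Psi_0)$. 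Subtracting \eqref{1.12} from \eqref{1.9} in the moving frame and integrating once in $\xi$ yields a coupled system for $(\Phi,\Psi)$ whose first equation is parabolic in $\Phi$ (carrying the viscous term $g'(v)\Phi_{\xi\xi}$) and whose second is hyperbolic, with the linear coupling governed by the strictly positive coefficient $-p'(\tilde v)$. Global existence then follows from the standard scheme: local existence for this parabolic--hyperbolic system, a uniform a priori estimate, and a continuation argument, with bootstrap quantity $N(T)=\sup_{0\le\tau\le T}\|(\Phi,\Psi)(\tau)\|_{3}$ assumed a priori small.

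The core is the basic energy estimate, where the large--amplitude difficulty is concentrated. Guided by the convex entropy $\eta(v,h)=\tfrac12 h^{2}+P(v)$ with $P'(v)=-p(v)$ (so that $P''=-p'>0$ by \eqref{pressure}), I would symmetrize the integrated system by testing the $\Phi$--equation against $-p'(\tilde v)\,\Phi$ and the $\Psi$--equation against $\Psi$. This produces the time derivative of the positive--definite energy $\int\big(\tfrac12(-p'(\tilde v))\Phi^{2}+\tfrac12\Psi^{2}\big)\,d\xi$ together with the parabolic dissipation $\int(-p'(\tilde v))\,g'(v)\,\Phi_\xi^{2}\,d\xi$ coming from the viscous term in the $v$--equation, which controls precisely the density gradient $\phi=\Phi_\xi$. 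All surviving terms are supported in the shock layer and carry the weight $p''(\tilde v)\,\tilde v_\xi$; here the entropy condition \eqref{1.8} forces $s<0$ and, via \eqref{1.13}, $\tilde v_\xi<0$, while $p''\ge0$, so these weights are sign--definite, and together with the profile relations in \eqref{1.12} I would reorganize the coupling terms so as to absorb them into the dissipation and the energy. The decisive structural point, and the reason for introducing $h$, is that in the $(v,h)$ variables the viscosity enters only the parabolic $v$--equation whereas the $h$--equation $h_t+p(v)_x=0$ is a genuine conservation law; testing it against $\Psi$ therefore generates no viscous--profile interaction at all. The indefinite coupling between the viscous flux and the profile derivative --- the obstruction that in the original $(v,u)$ variables forced \cite{Matsumura-Nishihara-1985-JJAM} and \cite{K-M} to require $(\gamma-1)|v_+-v_-|$, respectively $(\gamma-1)^2|v_+-v_-|$, to be small --- cancels identically, as announced after \eqref{1.9}. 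This is exactly what renders $|v_+-v_-|$ arbitrary.

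Next I would differentiate the integrated system once and twice in $\xi$ and repeat the weighted estimates to close in $H^{3}$ for $(\Phi,\Psi)$, equivalently $H^{2}$ for $(v-\tilde v,\,u-\tilde u)$; the parabolic structure supplies dissipation for $v_\xi,v_{\xi\xi},v_{\xi\xi\xi}$, while the hyperbolic $h$--derivatives are recovered through the coupling and the already--controlled pressure term, each higher shock--layer term being treated as in the basic estimate. An essential separate ingredient at this stage is a uniform positive lower (and upper) bound for $v$, guaranteeing no vacuum and keeping $p(v),\mu(v)$ in their smooth range for arbitrarily strong shocks; the effective velocity is what makes this accessible, since it turns the density gradient into a directly estimable quantity and yields a Kanel--type pointwise bound from the controlled $\|v_\xi\|$ and the energy (cf. \cite{Matsumura-Wang-2010-MAA,Vasseur-Yao-2016-CMS}). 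Combining all orders gives a closed inequality of the form $N(T)^{2}\lesssim N(0)^{2}+N(T)^{3}$, so that $\|(\Phi_0,\Psi_0)\|_3\le\delta_0$ produces the uniform bound, hence the global solution and the regularity \eqref{1.18}.

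Finally, the decay \eqref{1.19} follows in the usual way: the uniform bound gives $\int_0^\infty\|\phi_\xi(\tau)\|^2\,d\tau<\infty$ while $\frac{d}{d\tau}\|\phi_\xi\|^2$ stays integrable, so $\|v_\xi-\tilde v_\xi\|\to0$, and Sobolev interpolation upgrades this to $\sup_x|v-\tilde v|\to0$, the statement for $u$ being obtained after translating $h$ back through $u=h+g(v)_x$. I expect the main obstacle to be exactly the basic energy estimate of the second paragraph: verifying that, after the cancellation and after using $p''\ge0$ together with the entropy sign and the profile identities, every remaining shock--layer term is either nonnegative or absorbable by the parabolic dissipation and the smallness of $N(T)$, \emph{uniformly in the shock strength}. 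This is where the design of the new variable is indispensable, and where any residual, non--sign--definite contribution would have to be dominated by a weighted norm rather than by smallness of $|v_+-v_-|$.
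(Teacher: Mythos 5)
Your reformulation via the effective velocity, the anti-derivative framework, the continuation argument, and the final $L^1\cap BV$ decay step all coincide with the paper's; the discrepancy --- and it is fatal as written --- lies exactly in the step you yourself flag as decisive, the basic energy estimate. You propose to test $\eqref{2.3}_1$ against $-p'(\tilde v)\Phi$ and $\eqref{2.3}_2$ against $\Psi$, i.e.\ to put the weight $w(\xi):=-p'(\tilde v)>0$ on the \emph{parabolic} equation. Every integration by parts then differentiates this weight, and writing out the identity (with $w_\xi=-p''(\tilde v)\tilde v_\xi\ge0$, since $p''\ge0$ by \eqref{pressure} and $\tilde v_\xi\le0$) one finds
\begin{equation*}
\Bigl(\tfrac{1}{2}w\Phi^2+\tfrac{1}{2}\Psi^2\Bigr)_t+(\mathrm{flux})_\xi+w\,g'(\tilde v)\Phi_\xi^2
=-\tfrac{s}{2}\,w_\xi\Phi^2-w_\xi\,g'(\tilde v)\Phi\Phi_\xi-w_\xi\Phi\Psi+wG\Phi-p(v|\tilde v)\Psi .
\end{equation*}
Since $s<0$ and $\tilde v_\xi\le0$ give $s\tilde v_\xi\ge0$, the first term on the right, $-\tfrac{s}{2}w_\xi\Phi^2=\tfrac{s}{2}p''(\tilde v)\tilde v_\xi\Phi^2$, is a \emph{nonnegative} source, and the third, $-w_\xi\Phi\Psi$, is indefinite. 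Contrary to your claim, sign-definiteness of the weight $p''(\tilde v)\tilde v_\xi$ does not make these terms absorbable: the quadratic forms $\Phi^2$ and $\Phi\Psi$ carry no dissipation (your only dissipation is in $\Phi_\xi$, and there is no Poincar\'e inequality on $\mathbb{R}$); the a priori bound $\|(\Phi,\Psi)\|_{L^\infty}\le\delta$ only yields $\int_0^t\!\int w_\xi(\Phi^2+|\Phi\Psi|)\,d\xi d\tau\le C\delta^2 t$, which grows in $t$; a Gronwall argument would give exponential growth rather than a uniform bound; and by design you have no smallness of $|v_+-v_-|$ to invoke. Note also that in your version the $\Psi$-convection term $-s\Psi\Psi_\xi$ is a pure divergence, so the favorable weighted $\Psi^2$ term is entirely absent and cannot help. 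In short, your symmetrization reintroduces precisely the large-shock obstruction that the new variable was designed to remove.

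The cure is the paper's \emph{asymmetric} choice, see \eqref{3.6}: multiply $\eqref{2.3}_1$ by $\Phi$ (unweighted) and $\eqref{2.3}_2$ by $-\Psi/p'(\tilde v)$, i.e.\ put the weight on the \emph{hyperbolic} equation. Then (i) the cross terms combine into the exact divergence $-(\Phi\Psi)_\xi$, because the multiplier normalizes the coefficient of $\Phi_\xi$ in $\eqref{2.3}_2$ to $-1$, so no $p''(\tilde v)\tilde v_\xi\Phi\Psi$ residual ever appears; (ii) since $g'(\tilde v)\Phi_{\xi\xi}+g''(\tilde v)\tilde v_\xi\Phi_\xi=(g'(\tilde v)\Phi_\xi)_\xi$ is already in divergence form, testing with $\Phi$ gives the clean dissipation $g'(\tilde v)\Phi_\xi^2$ --- this is the cancellation of $g''(\tilde v)\tilde v_\xi\Phi_\xi\Phi$ that the paper emphasizes after \eqref{3.6}; and (iii) the only surviving weighted term is $-\tfrac{s}{2}\bigl(1/p'(\tilde v)\bigr)_\xi\Psi^2=\tfrac{p''(\tilde v)}{2(p'(\tilde v))^2}s\tilde v_\xi\Psi^2\ge0$, which sits on the good side of the inequality by \eqref{3.7}. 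With that correction, your outline of the higher-order estimates, the recovery of $\int_0^t\|\Psi_\xi\|^2d\tau$ from the hyperbolic equation, and the decay argument via the $L^1\cap BV$ lemma is the paper's proof. A minor further remark: the Kanel-type pointwise bound on $v$ you invoke is superfluous in this small-perturbation framework; $\|\Phi_\xi\|_{L^\infty}\le C\delta$ already yields $\tfrac12 v_+\le v\le\tfrac32 v_-$, as the paper notes after \eqref{3.1}.
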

\begin{remark}
	The theorem \ref{th1} means that any large amplitude shock wave is nonlinearly stable for compressible Navier-Stokes equations even for general pressure $p(v)$ and viscosity $\mu(v)$ under zero mass condition. We believe that the large amplitude shock wave is still stable for general small perturbation, without zero mass conditon, however, this will be left for future.
\end{remark}

\begin{remark}
	A clever weighted energy method was introduced in \cite{M-N-1994} for the non-convex nonlinearity. In the theorem \ref{th1}, the weighted estimates are not needed even the pressure $p$ could not be strictly convex. 
\end{remark}

The rest of this paper will be organized as follows. In section 2, we establish the existence of the large amplitude viscous shock wave.  In section 3, we reformulate the problem by the anti-derivatives of the perturbations around the viscous shock wave. In section 4, the a priori estimates are  established and in section 5,  the proof of the main theorem is completed.

\

\textbf{Notations:} Throughout this paper, the generic positive constants will be denoted by C without confusion, which may changes from line to line. Notations $L^p, H^s$ denote the usual Lebesgue space and Sobolev space on $\mathbb{R}$ with norms $\|\cdot\|_{L^p}$ and $\|\cdot\|_{s}$, respectively. For simplicity, $\|\cdot\|:=\|\cdot\|_{L^2}$.

\section{Existence of viscous shock wave}

The existence theorem \ref{existence} is based on the following  useful lemma.
\begin{lemma}[cf. Caflisch \cite{Caflisch-1979-CPAM}]\label{caflisch}
	For the ordinary differential equation \eqref{1.13}, if $B(\tilde{v})$ satisfies the following conditions
	\begin{itemize}
		\item[(i)] $B(\tilde{v})=0$ has only two roots $v_-,v_+$;
		\item[(ii)] $\left.\frac{d}{d\tilde{v}}B(\tilde{v})\right|_{\tilde{v}=v_-}>0,\left.\frac{d}{d\tilde{v}}B(\tilde{v})\right|_{\tilde{v}=v_+}<0$;
		\item[(iii)] $B(\tilde{v})<0$ when $v_+<\tilde{v}<v_-$.
	\end{itemize}
	Then there exists a unique global smooth solution $\tilde{v}(\xi)$ of  \eqref{1.13}, up to a shift, satisfying $\tilde{v}(\pm\infty)=v_{\pm}$.
\end{lemma}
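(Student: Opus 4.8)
The plan is to regard \eqref{1.13}, i.e. $\tilde v_\xi=B(\tilde v)$, as a scalar autonomous first-order ODE and to produce the connecting orbit by separation of variables. Since $p$ and $\mu$ are smooth and $s\neq0$, the right-hand side $B$ is smooth and hence locally Lipschitz, so Picard--Lindel\"of gives, for any prescribed value $\tilde v(0)=v_0\in(v_+,v_-)$, a unique local solution. By hypothesis (iii) one has $B<0$ on $(v_+,v_-)$, whence $\tilde v_\xi<0$ and $\tilde v$ is strictly decreasing along the orbit; by (i) the only equilibria are $v_\pm$, and uniqueness for the autonomous equation prevents the orbit from reaching either equilibrium in finite $\xi$, so it stays trapped in the compact interval $[v_+,v_-]$. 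Boundedness then promotes the local solution to a global one on all of $\mathbb R$.

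To identify the limits and the approach rates I would integrate explicitly. Set
\[
F(w)=\int_{v_0}^{w}\frac{dz}{B(z)},\qquad w\in(v_+,v_-),
\]
which is smooth and, because $B<0$, strictly decreasing, and characterize the solution implicitly by $F(\tilde v(\xi))=\xi$. The crux is the behaviour of $F$ at the two endpoints, and this is exactly where hypothesis (ii) enters. Near $v_-$ we have $B(z)\sim B'(v_-)(z-v_-)$ with $B'(v_-)>0$, so the integrand behaves like a simple pole and the integral diverges, forcing $F(w)\to-\infty$ as $w\to v_-^-$; symmetrically $B'(v_+)<0$ forces $F(w)\to+\infty$ as $w\to v_+^+$. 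Thus $F$ is a strictly decreasing smooth bijection of $(v_+,v_-)$ onto $\mathbb R$, and its inverse defines the profile $\tilde v(\xi)=F^{-1}(\xi)$ with $\tilde v(-\infty)=v_-$ and $\tilde v(+\infty)=v_+$, as required; the same linearizations show the approach to $v_\pm$ is exponential.

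Smoothness of $\tilde v$ is immediate from this implicit representation together with the smoothness and nonvanishing of $B$ on the open interval. For uniqueness up to a shift I would use autonomy: if $\tilde v_1$ and $\tilde v_2$ are two connecting profiles, then by strict monotonicity each attains the fixed value $v_0$ at a single point $\xi_1$ and $\xi_2$ respectively, so $\tilde v_2(\cdot)$ and $\tilde v_1(\cdot-(\xi_1-\xi_2))$ solve the same autonomous ODE with the same datum and hence coincide. The only genuine subtlety in the whole argument is the endpoint analysis of $F$ --- equivalently, that the orbit reaches the equilibria precisely as $\xi\to\pm\infty$ rather than in finite $\xi$, which is what the nondegeneracy hypothesis (ii) guarantees; the trapping, global existence and monotonicity are standard phase-line reasoning.
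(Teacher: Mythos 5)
Your proof is correct. Note, however, that the paper itself gives no proof of this lemma: it is quoted from Caflisch \cite{Caflisch-1979-CPAM} and used as a black box, and what the paper proves afterwards (the proof of Theorem \ref{existence}) is only the verification of the hypotheses (i)--(iii) for the particular $B$ in \eqref{1.13}. So your argument supplies exactly the part the paper delegates to the literature, and it does so by the standard route: phase-line reasoning (local existence and uniqueness by Picard--Lindel\"of, monotonicity from (iii), trapping in $[v_+,v_-]$ because the equilibria cannot be reached in finite $\xi$), plus separation of variables, with (ii) guaranteeing that $F(w)=\int_{v_0}^{w}dz/B(z)$ blows up at both endpoints, so that $F$ is a decreasing diffeomorphism of $(v_+,v_-)$ onto $\mathbb{R}$ and the equilibria are attained only as $\xi\to\pm\infty$. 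Two small remarks. First, in the uniqueness step you should state explicitly that any profile with $\tilde v(\pm\infty)=v_\pm$ must lie entirely in $(v_+,v_-)$ (it can neither cross nor touch an equilibrium without being constant, by uniqueness for the autonomous equation), which is what lets you invoke the intermediate value theorem to find $\xi_1,\xi_2$ with $\tilde v_i(\xi_i)=v_0$. Second, the shift in your last sentence has the wrong sign: the function agreeing with $\tilde v_2$ is $\tilde v_1(\cdot+(\xi_1-\xi_2))$, not $\tilde v_1(\cdot-(\xi_1-\xi_2))$; this is immaterial since the conclusion is only ``up to a shift.'' Incidentally, your endpoint analysis shows that (ii) is not even needed for the divergence of $F$ --- smoothness alone gives $|B(z)|\le C|z-v_\pm|$ near the endpoints, hence a nonintegrable singularity of $1/B$; what (ii) buys is the exponential rate of approach to $v_\pm$, which is the form in which the nondegeneracy is typically used downstream.
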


\begin{proof}[Proof of theorem \ref{existence}]
To prove the existence theorem \ref{existence},	we only need to verify the above three conditions for the term $B(\tilde{v})$. 
	
\

	\text{\bf (ii)} By the entroy condition \eqref{1.8}, its straightforward to check that
	\begin{equation}\label{e1}
	\frac{d}{d\tilde{v}}B(\tilde{v})\left.\right|_{\tilde{v}=v_+}=\frac{v_+}{-s\mu(v_+)}f'(v_+)=\frac{v_+}{-s\mu(v_+)}(s^2+p'(v_+))<0;\quad
	\frac{d}{d\tilde{v}}B(\tilde{v})\left.\right|_{\tilde{v}=v_-}=\frac{v_-}{-s\mu(v_-)}f'(v_-)>0.
	\end{equation}
\

	\text{\bf (iii)} Since $B(v_+)=B(v_-)=0$, (iii) is a direct conclusion of (i).
	
\

It remains to verify \text{\bf (i)}. In view of \eqref{1.13}, it turns  to verify \text{\bf (i)} for $f(\tilde{v}):= s ^ { 2 } \left(  \tilde { v }- v _ { - } \right) + p ( \tilde { v } ) - p \left( v _ { - } \right)$ due to the fact that  $f(v_-)=f(v_+)=0$, $\frac { \tilde { v }  } {- s\mu(\tilde { v }) }>0$, R-H condition \eqref{1.6} and $\mu(\tilde{v})>0$. Assume that \text{\bf (i)} is not correct, then  there exists $v^{*}\in(v_+,v_-)$ such that $\, f(v^{*})=0$. 
Since $f(v_+)=f(v^{*})=f(v_-)=0$, thanks to the mean value theorem, there exist two points $v_1\in(v_+,v^{*})$ and $v_2\in(v^{*},v_-)$ such that $f'(v_1)=f'(v_2)=0$. Note that  $f''(\tilde{v})=p''(\tilde{v})\geq0$, $f'(\tilde{v})$ is monotone increasing and $f'(\tilde{v})=0$ for all $\tilde{v}\in[v_1,v_2]$.  Since $v^*\in (v_1,v_2)$, $f(\tilde{v})=0$ holds for all $\tilde{v}\in[v_1,v_2]$ due to $f(v^*)=0$. In particular, $f(v^{*})=f'(v^{*})=0$. 
	
Set $\hat{v}=:\inf\{\tilde{v}\in (v_+,v_1]|f(\tilde{v})=f'(\tilde{v})=0\}.$	Note that $f'(v_+)<0$, then $\hat{v}>v_+$. Repeating the previous argument for $v_1$, we know that there exists $v_*\in (v_+,\hat{v})$ such that 
$f(v_*)=f'(v_*)=0$ which is a contradiction with 
the fact that $\hat{v}$ is the minimum of the points $\tilde{v}$ satisfying $f(\tilde{v})=f'(\tilde{v})=0$. Thus  \text{\bf (i)} is verified for $B(\tilde{v})$.
	
Finally,  we conclude from \text{\bf (iii)} that $u'(\xi)=-sv'(\xi)=-sB(\tilde{v})\le 0$. 
\end{proof}


\section{Reformulation of the problem}
By changing the variables $(x,t)\rightarrow(\xi=x-st,t)$,   the systems \eqref{1.9} and \eqref{1.12} beome, respectively, 
\begin{equation}\label{2.1}
  \left\{
  \begin{array} { l }
  {v_{t}-sv_{\xi}-h_{\xi}=\left(g'(v)v_{\xi}\right)_{\xi} }, \\
   {h_{t}-sh_{\xi}+p(v)_{\xi}=0},
   \end{array}
   \right.
\end{equation}
and
\begin{equation}\label{2.1-1}
\left\{
\begin{array} { l }
{\tilde{v}_{t}-s\tilde{v}_{\xi}-\tilde{h}_{\xi}=\left(g'(\tilde{v})\tilde{v}_{\xi}\right)_{\xi} }, \\
{\tilde{h}_{t}-s\tilde{h}_{\xi}+p(\tilde{v})_{\xi}=0}.
\end{array}
\right.
\end{equation}
Then we define 
\begin{equation}\label{2.2}
  \Phi ( \xi , t ) = \int _ { - \infty } ^ { \xi } [ v ( z , t ) - \tilde { v } ( z ) ] d z , \quad
  \Psi ( \xi , t ) = \int _ { - \infty } ^ { \xi } [ h ( z , t ) - \tilde { h } ( z ) ] d z.
\end{equation}
Substitute \eqref{2.1-1} from \eqref{2.1} and integrate the resulting system with respect to $\xi$, we have
\begin{equation}\label{2.3}
  \left\{
  \begin{array} { l }
  {\Phi_{t}-s\Phi_{\xi}-\Psi_{\xi}-g'(\tilde{v})\Phi_{\xi\xi}-g''(\tilde{v})\tilde{v}_{\xi}\Phi_{\xi}=G }, \\
  {\Psi_{t}-s\Psi_{\xi}+p'(\tilde{v})\Phi_{\xi}=-p(v|\tilde{v}) },
  \end{array}
  \right.
\end{equation}
where
\begin{equation}\label{2.6}
\begin{array}{l}
{G=\left( g'(v) - g'(\tilde{v}) \right) \Phi_ { \xi \xi } + \left( g'(v) - g'(\tilde{v}) -g''(\tilde{v})\Phi_{\xi}\right) \tilde { v } _ { \xi }},\\
{p ( v | \tilde { v } )= p ( v ) - p ( \tilde { v } ) - p ^ { \prime } ( \tilde { v } ) ( v - \tilde { v } ) }.
\end{array}
\end{equation}
From \eqref{1.15}, the initial data belongs to 
\begin{equation}\label{2.4}
  ( \Phi, \Psi) ( \xi , 0 ) = \left( \Phi_ { 0 } , \Psi_ { 0 } \right) ( \xi ) \in H ^ { 3}.
\end{equation}

We will seek the solutions in the functional space $X(0,T)$ for any $0\leq T<\infty$,
\begin{equation}\label{f}
  \begin{aligned} X_\delta(0 ,T) = \{ ( \Phi, \Psi) & \in C \left( 0 , T ; H ^ { 3 } \right) \left| \Phi_ { \xi } \in L ^ { 2 } \left( 0 , T ; H ^ { 3 } \right) , \Psi_ { \xi } \in L ^ { 2 } \left( 0 , T ; H ^ { 2 } \right) \right. \\
  &\sup _ { 0 \leq t \leq T } \| ( \Phi, \Psi) ( t ) \| _ { 3}  \leq \delta \}. \end{aligned}
\end{equation}
where the constant $\delta<<1$ is small. Since the local existence of the solution can be shown by standard way as contraction mapping principle, to prove the global existence of solutions of \eqref{2.3}, we only need to establish the following a priori estimates.

\begin{proposition}[A priori estimate]\label{prop2.1}
  Suppose that  $(\Phi,\Psi) \in X_\delta(0,T)$ is the solution of systems \eqref{2.3}-\eqref{2.6} for some time $T>0$, there exists a positive constant $\delta_0$ independent of $T$ such that if \begin{equation}\sup\limits_{t\in[0,T]} \|(\Phi,\Psi)(t)\|_{3}\leq\delta\le\delta_0,
  \end{equation} 
  then for any $t\in[0,T]$, it holds that
  \begin{equation}\label{2.7}
    \| ( \Phi, \Psi) ( t ) \| _ { 3 } ^ { 2 } + \int _ { 0 } ^ { t } \left( \left\| \Phi_ { \xi } ( \tau ) \right\| _ { 3 } ^ { 2 } + \left\| \Psi_ { \xi } ( \tau ) \right\| _ { 2 } ^ { 2 } \right) d \tau \leq C_0 \left\| \left( \Phi_ { 0 } , \Psi_ { 0 } \right) \right\| _ { 3 } ^ { 2 },
  \end{equation}
  where $C_0>1$ is a constant independent of T.
\end{proposition}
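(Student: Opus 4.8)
The plan is to prove the a priori estimate \eqref{2.7} by a closed hierarchy of elementary energy estimates in the spirit of \cite{Matsumura-Nishihara-1985-JJAM}, exploiting the parabolic--hyperbolic structure of the reformulated system \eqref{2.3}. Under the a priori assumption $\sup_t\|(\Phi,\Psi)\|_3\le\delta$, Sobolev embedding gives $\|\Phi_\xi\|_{L^\infty}=\|v-\tilde v\|_{L^\infty}\le C\delta$, so $v$ stays in a fixed compact interval around the profile on which $p'$, $p''$, $g'$, $g''$ and $|p'|^{-1}$ are all bounded above and below; in particular the quadratic remainders obey $|p(v|\tilde v)|\le C\Phi_\xi^2$, $|g'(v)-g'(\tilde v)|\le C|\Phi_\xi|$ and $|g'(v)-g'(\tilde v)-g''(\tilde v)\Phi_\xi|\le C\Phi_\xi^2$. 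Once \eqref{2.7} is closed, the global bound follows from local existence in $X_\delta(0,T)$ by the usual continuation argument.

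For the zeroth order step I would multiply the first equation of \eqref{2.3} by $\Phi$ and the second by $-\Psi/p'(\tilde v)$ and add. The natural energy
\[
E_0(t)=\int_{\mathbb R}\Big(\tfrac12\Phi^2-\tfrac{1}{2p'(\tilde v)}\Psi^2\Big)\,d\xi
\]
is positive definite and equivalent to $\|(\Phi,\Psi)\|^2$, since $-1/p'(\tilde v)$ is bounded above and below. The decisive feature of the effective-velocity reformulation is that the coupling is skew: after integration by parts the cross terms $-\int\Phi\Psi_\xi$ and $-\int\Psi\Phi_\xi$ cancel exactly, so no indefinite $\tilde v_\xi$-weighted product survives---this is the cancellation advertised in \eqref{3.6}, and it is precisely what removes any dependence on the shock strength. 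The parabolic term yields the dissipation $\int g'(\tilde v)\Phi_\xi^2$, while the transport term, after integration by parts, produces $\tfrac{|s|}{2}\int p''(\tilde v)|\tilde v_\xi|\,\Psi^2/p'(\tilde v)^2$, which has a favourable sign thanks to $s<0$, $p''\ge0$ and $\tilde v_\xi<0$. Hence $\frac{d}{dt}E_0+c\|\Phi_\xi\|^2\le C\delta(\|\Phi_\xi\|^2+\|\Phi_{\xi\xi}\|^2)$ after bounding $\int\Phi G$ and $\int p(v|\tilde v)\Psi/p'(\tilde v)$ with the quadratic estimates above.

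The basic estimate controls $\Phi_\xi$ but not $\Psi_\xi$, because the second equation carries no intrinsic dissipation. To recover it I would introduce the cross functional $J(t)=\int\Phi_\xi\Psi\,d\xi$; differentiating in $t$ and substituting $\Phi_t,\Psi_t$ from \eqref{2.3}, the terms $s\int\Phi_\xi\Psi_\xi$ cancel and one finds
\[
\frac{d}{dt}J+\|\Psi_\xi\|^2=-\int p'(\tilde v)\Phi_\xi^2-\int(g'(\tilde v)\Phi_\xi)_\xi\Psi_\xi-\int\Phi_\xi\,p(v|\tilde v)-\int G\,\Psi_\xi .
\]
Since $-p'(\tilde v)>0$ the first term is harmless, and $\int\Phi_{\xi\xi}\Psi_\xi$ is absorbed by Cauchy--Schwarz into $\varepsilon\|\Psi_\xi\|^2+C_\varepsilon\|\Phi_{\xi\xi}\|^2$, so adding $\kappa J$ with $\kappa\ll1$ to $E_0$ produces the missing $\kappa\|\Psi_\xi\|^2$ at the cost of $\|\Phi_{\xi\xi}\|^2$. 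Differentiating \eqref{2.3} in $\xi$ up to three times and repeating the energy estimate together with the analogous cross functionals $J_k=\int\partial_\xi^k\Phi_\xi\,\partial_\xi^k\Psi\,d\xi$ closes the loop: the cost $\|\partial_\xi^{k+2}\Phi\|^2$ incurred by the order-$k$ cross estimate is paid by the parabolic dissipation of the order-$(k+1)$ energy estimate, and the chain terminates once the $H^3$ level is reached. Throughout, $G$, $p(v|\tilde v)$ and their derivatives are controlled by $C\delta$ times the full dissipation via the $L^\infty$ bounds and the interpolation $\|f\|_{L^\infty}\le C\|f\|_1$.

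Choosing the constants $\kappa_0,\dots,\kappa_2$ successively small and then $\delta_0$ small, the combination $\mathcal E=\sum_k E_k+\sum_k\kappa_k J_k$ is equivalent to $\|(\Phi,\Psi)\|_3^2$ and satisfies $\frac{d}{dt}\mathcal E+c\big(\|\Phi_\xi\|_3^2+\|\Psi_\xi\|_2^2\big)\le0$; integrating in $t$ and using $\mathcal E(0)\le C\|(\Phi_0,\Psi_0)\|_3^2$ gives \eqref{2.7}. I expect the main obstacle to be recovering the dissipation for the hyperbolic component $\Psi$ \emph{uniformly in the shock strength}: since the second equation has no smoothing, the cross-functional mechanism is indispensable, and one must verify that every interlocking error term is genuinely bounded by $\delta$ alone---never by the large quantities $|v_+-v_-|$ or $\sup|\tilde v_\xi|$. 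Checking that all profile-dependent terms appear with the correct sign, which is exactly the role of the reformulation and of the identity \eqref{3.6}, is the delicate point on which the whole argument turns.
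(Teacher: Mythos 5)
Your strategy is, in substance, exactly the paper's: the basic estimate with multipliers $\Phi$ and $-\Psi/p'(\tilde v)$ and the sign facts \eqref{3.7} is Lemma \ref{le3.2}; your cross functionals $J_k$ coincide, up to integration by parts and a sign, with the quantities $\int\Phi\Psi_\xi\,d\xi$ and $\int\Phi_\xi\Psi_{\xi\xi}\,d\xi$ whose time derivatives the paper uses in Lemmas \ref{le3.4} and \ref{le3.6} to recover the $\Psi$-dissipation (your identity for $\frac{d}{dt}J+\|\Psi_\xi\|^2$ is the integrated form of \eqref{3.16}); and your order-by-order energy estimates are Lemmas \ref{le3.3}, \ref{le3.5}, \ref{le3.7}. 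So all the key ideas are present and correct.

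One step, as written, does not close, and it is worth naming. You claim that $\mathcal E=\sum_k E_k+\sum_k\kappa_k J_k$, with \emph{unit} weights on the $E_k$ and only the $\kappa_k$ small, satisfies $\frac{d}{dt}\mathcal E+c\bigl(\|\Phi_\xi\|_3^2+\|\Psi_\xi\|_2^2\bigr)\le 0$, on the grounds that every error term is of size $C\delta$ times the dissipation. That last assertion is false: the profile-coefficient error terms in the higher-order estimates --- e.g.\ $g''(\tilde v)\tilde v_\xi\Phi_\xi\Phi_{\xi\xi}$ handled in \eqref{3.11}, the term $(1/p'(\tilde v))_\xi p'(\tilde v)\Phi_\xi\Psi_\xi$ in \eqref{3.13}, and the terms $I_2$, $I_4$, $I_5$ in \eqref{3.21} --- are bounded only by $O(1)$ (not $O(\delta)$) multiples of the \emph{lower-order} dissipation $\|\Phi_\xi\|_1^2$ and $\|\Psi_\xi\|^2$. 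With unit weights, a loss $C\|\Phi_\xi\|^2$ coming from the $E_1$-estimate cannot be absorbed by the gain $c_0\|\Phi_\xi\|^2$ from the $E_0$-estimate unless one happens to have $C<c_0$. The repair is standard and is precisely what the paper does (it multiplies \eqref{3.5} by a large constant before adding \eqref{3.14}, and uses each lemma's integrated bound inside the next): weight the pieces hierarchically, $\mathcal E=\sum_k\lambda_kE_k+\sum_k\kappa_kJ_k$ with an interlaced hierarchy $\lambda_0\gg\lambda_1\gg\kappa_0\gg\lambda_2\gg\kappa_1\gg\lambda_3\gg\kappa_2$, after which your differential inequality holds and \eqref{2.7} follows. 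A second, purely conceptual correction: the cancellation the paper emphasizes in \eqref{3.6} is not the skew cancellation of the hyperbolic cross terms $\Phi\Psi_\xi+\Psi\Phi_\xi$ (that occurs for any such weighted pairing); it is the cancellation of $g''(\tilde v)\tilde v_\xi\Phi_\xi\Phi$ between the first-order coefficient in $\eqref{2.3}_1$ and the term produced when $-g'(\tilde v)\Phi_{\xi\xi}\Phi$ is integrated by parts. That cancellation, together with the favorable sign of $-\frac{s}{2}(1/p'(\tilde v))_\xi\Psi^2$, is what makes the estimate uniform in the shock strength for general $\mu(v)$; your computation produces it automatically, so this does not affect the validity of your argument, only its explanation.
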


Once Proposition \ref{prop2.1} is proved, we can extend the unique local solution $(\Phi,\Psi)$ to $T=+\infty$ by the standard continuation process. Thus we have the global solution as below.
\begin{theorem}\label{th3}
  Suppose that $\left(\Phi_{0},\Psi_{0}\right) \in H^{3}$, then there exists a positive constant $\delta_{1}\le \frac{\delta_0}{\sqrt{C_0}}$ such that if $\left\|\Phi_{0},\Psi_{0}\right\|_{3} \leq \delta_{1}$, then the Cauchy problem \eqref{2.3}-\eqref{2.6} has a unique global solution $(\Phi,\Psi)\in X_{\delta_0}(0,\infty)$ satisfying
  \begin{equation}
    \sup_{t\geq0}\|(\Phi,\Psi)(t)\|_{3}^{2}+\int_{0}^{\infty}\left(\left\|\Phi_{\xi}(t)\right\|_{3}^{2}+\left\|\Psi_{\xi}(t)\right\|_{2}^{2}\right) dt
    \leq C_0 \left\| \left(\Phi_{0},\Psi_{0}\right) \right\|_{3}^{2}.
  \end{equation}
\end{theorem}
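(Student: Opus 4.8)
The plan is to combine the local-in-time existence obtained by the contraction mapping principle (as noted just before the statement) with the uniform a priori estimate of Proposition \ref{prop2.1}, closing the argument by a standard continuation scheme. The whole point is that the bound in \eqref{2.7} is independent of $T$, so once the solution is trapped inside the ball of radius $\delta_0$ in $H^3$ it can never escape and hence exists for all time.

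First I would fix the constants. Choosing $\delta_1$ with $\sqrt{C_0}\,\delta_1 \le \delta_0$ (say $\delta_1 \le \delta_0/\sqrt{C_0}$) and assuming $\|(\Phi_0,\Psi_0)\|_3 \le \delta_1$, the right-hand side of \eqref{2.7} obeys
$$C_0 \|(\Phi_0,\Psi_0)\|_3^2 \le C_0 \delta_1^2 \le \delta_0^2.$$
Thus the a priori estimate is self-improving: on any interval $[0,T]$ where $\sup_{[0,T]}\|(\Phi,\Psi)\|_3 \le \delta_0$, Proposition \ref{prop2.1} forces the stronger conclusion $\|(\Phi,\Psi)(t)\|_3 \le \sqrt{C_0}\,\delta_1 \le \delta_0$, together with control of the full dissipation integral appearing in \eqref{2.7}.

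Next I would run the continuation. Local existence gives a unique solution in $X_{\delta_0}(0,T_0)$ for some $T_0>0$; let $\bar T$ denote the supremum of all $T$ for which a solution exists in $X_{\delta_0}(0,T)$. If $\bar T<\infty$, the a priori estimate holds on $[0,\bar T)$ and, by the regularity class \eqref{f}, the endpoint state $(\Phi,\Psi)(\bar T)$ is a well-defined element of $H^3$ with norm at most $\delta_0$. Restarting the local existence theorem from time $\bar T$ then extends the solution strictly beyond $\bar T$, contradicting maximality; hence $\bar T=\infty$. Taking the supremum over $t\ge 0$ in \eqref{2.7} (legitimate because the constant $C_0$ is uniform in $T$) yields the stated global energy bound, and uniqueness is inherited from the local theory.

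The main obstacle is not located in this assembly, which is entirely routine, but in establishing Proposition \ref{prop2.1}; that estimate carries all of the analytical weight and is the subject of the next section. Within the present argument the only delicate point is the margin in the choice of $\delta_1$: to be able to restart the solver at $\bar T$ one wants the endpoint norm to stay within the radius where the local theory applies, so it is cleanest to take $\delta_1$ slightly below $\delta_0/\sqrt{C_0}$, or equivalently to run the bootstrap with a ball marginally larger than $\delta_0$.
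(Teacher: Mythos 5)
Your proposal is correct and follows exactly the route the paper takes: the paper's own ``proof'' of Theorem \ref{th3} is the single remark that Proposition \ref{prop2.1} plus local existence (via contraction mapping) yields the global solution by the standard continuation process, which is precisely the argument you spell out, including the correct choice $\delta_1\le\delta_0/\sqrt{C_0}$ so that the a priori bound keeps the solution inside $X_{\delta_0}$. Your closing observation about leaving a margin in $\delta_1$ (or enlarging the bootstrap ball) to restart the local theory is a legitimate refinement of the same standard argument, not a departure from it.
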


\section{A priori estimate}
Assume that the systems \eqref{2.3}-\eqref{2.6} has a solution $( \Phi, \Psi) \in X_\delta(0,T), \delta<<1$ for some $T>0$, that is, 
\begin{equation}\label{3.1}
  \sup _ { t \in [ 0 , T ] } \| ( \Phi, \Psi) ( t ) \| _ { 2 } \leq \delta,
\end{equation}
then it follows from the Sobolev inequality that  $\frac{1}{2}v_+\le v\le \frac{3}{2}v_-$ and  
\begin{equation}\label{3.2}
  \sup _ { t \in [ 0 , T ] } \left\{ \| ( \Phi, \Psi) ( t ) \| _ { L^ \infty } + \| ( v - \tilde { v } , h - \tilde { h } ) ( t ) \| _ { L ^ { \infty } } \right\} \leq \delta.
\end{equation}

\begin{lemma}\label{le3.1}
  Under the a priori assumption \eqref{3.1}, we have
  \begin{equation}\label{3.3}
    \begin{array} { l }
    { p ( v | \tilde { v } ) \leq C \Phi_ { \xi } ^ { 2 } }, \\
    { \left| p(v|\tilde{v})_{\xi}\right|\leq C\left(|\Phi_{\xi\xi} | |\Phi_{\xi}| + |\tilde{v}_\xi |\Phi_{\xi}^{2}\right)}, \\
    { \left| p(v|\tilde{v})_{\xi\xi}\right|\leq C\left(|\Phi_{\xi\xi\xi}| |\Phi_{\xi}|+\Phi_{\xi}^{2}+\Phi_{\xi\xi}^{2}+|\Phi_{\xi}||\Phi_{\xi\xi}|^2 \right) },
    \end{array}
  \end{equation}
  and
  \begin{equation}\label{3.4}
    \begin{array} { l }
    { | G | \leq C \left( | \Phi_ { \xi }| | \Phi_ { \xi \xi } | + | \tilde { v } _ { \xi }| | \Phi_ { \xi } |^2 \right) }, \\
    { \left|G_{\xi}\right|\leq C\left(\Phi_{\xi\xi}^{2}+|\Phi_{\xi}| |\Phi_{\xi\xi}|^2+|\Phi_{\xi}| |\Phi_{\xi\xi\xi}|+\left|\Phi_{\xi}\right|^2\right) }.
    \end{array}
  \end{equation}
\end{lemma}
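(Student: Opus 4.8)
The plan is to reduce everything to two elementary Taylor-type estimates and apply them repeatedly. First I would record the consequences of the a priori bound \eqref{3.1}: by Sobolev embedding one has $\frac12 v_+\le v\le \frac32 v_-$, so that both $v$ and $\tilde v$ range over a fixed compact interval $I\subset(0,\infty)$ bounded away from $0$. Since $p$ and $\mu$ are smooth, so is $g'=\mu/v$ from \eqref{1.8-1}, and all of $p',p'',p''',g',g'',g'''$ are bounded on $I$; moreover $\tilde v_\xi=B(\tilde v)$ and $\tilde v_{\xi\xi}$ are bounded by Theorem~\ref{existence} and \eqref{1.13}. I would also record the identities $\Phi_\xi=v-\tilde v$, $\Phi_{\xi\xi}=v_\xi-\tilde v_\xi$, $\Phi_{\xi\xi\xi}=v_{\xi\xi}-\tilde v_{\xi\xi}$ coming from \eqref{2.2}. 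The workhorse is: for any smooth $F$ on $I$, the mean value theorem gives $|F(v)-F(\tilde v)|\le C|\Phi_\xi|$, and second-order Taylor gives $|F(v)-F(\tilde v)-F'(\tilde v)\Phi_\xi|\le C\Phi_\xi^2$.

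The estimate $p(v|\tilde v)\le C\Phi_\xi^2$ is immediate: by \eqref{2.6}, $p(v|\tilde v)$ is exactly the second-order Taylor remainder of $p$ at $\tilde v$, so $p(v|\tilde v)=\tfrac12 p''(\theta)\Phi_\xi^2$ for some $\theta\in I$; boundedness of $p''$ gives the upper bound, while $p''\ge 0$ from \eqref{pressure} gives $p(v|\tilde v)\ge 0$. For the first derivative I would differentiate \eqref{2.6} in $\xi$; after cancelling the $\pm p'(\tilde v)\tilde v_\xi$ terms and substituting $v_\xi=\Phi_{\xi\xi}+\tilde v_\xi$ one reaches the structural identity $p(v|\tilde v)_\xi=(p'(v)-p'(\tilde v))\Phi_{\xi\xi}+[\,p'(v)-p'(\tilde v)-p''(\tilde v)\Phi_\xi\,]\tilde v_\xi$, whose two bracketed factors are $O(|\Phi_\xi|)$ and $O(\Phi_\xi^2)$ respectively by the two Taylor estimates with $F=p'$; this yields exactly the claimed bound. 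Notice this is algebraically identical to the definition of $G$ in \eqref{2.6} with $g'$ in place of $p'$, so the very same two lines give $|G|\le C(|\Phi_\xi||\Phi_{\xi\xi}|+|\tilde v_\xi||\Phi_\xi|^2)$.

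For the second derivatives I would differentiate the structural identity once more. Writing $A:=p'(v)-p'(\tilde v)$ and $B:=A-p''(\tilde v)\Phi_\xi$, so that $p(v|\tilde v)_\xi=A\Phi_{\xi\xi}+B\tilde v_\xi$, I obtain $p(v|\tilde v)_{\xi\xi}=A\Phi_{\xi\xi\xi}+A_\xi\Phi_{\xi\xi}+B\tilde v_{\xi\xi}+B_\xi\tilde v_\xi$. The terms $A\Phi_{\xi\xi\xi}$ and $B\tilde v_{\xi\xi}$ give $C|\Phi_\xi||\Phi_{\xi\xi\xi}|$ and $C\Phi_\xi^2$ at once. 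Substituting $v_\xi=\Phi_{\xi\xi}+\tilde v_\xi$ yields $A_\xi=p''(v)\Phi_{\xi\xi}+(p''(v)-p''(\tilde v))\tilde v_\xi$, so $A_\xi\Phi_{\xi\xi}$ contributes $C\Phi_{\xi\xi}^2+C|\Phi_\xi||\Phi_{\xi\xi}|$, the last absorbed by Young into $\Phi_\xi^2+\Phi_{\xi\xi}^2$. The delicate term is $B_\xi\tilde v_\xi$: a crude count would leave a factor $\big[(p''(v)-p''(\tilde v))-p'''(\tilde v)\Phi_\xi\big]\tilde v_\xi$ that looks only linear in $\Phi_\xi$. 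The point — and the one step I would check most carefully — is that this bracket is precisely the second-order Taylor remainder of $p''$ at $\tilde v$, hence $O(\Phi_\xi^2)$, so no linear term survives; the remaining piece of $B_\xi$ is $(p''(v)-p''(\tilde v))\Phi_{\xi\xi}=O(|\Phi_\xi||\Phi_{\xi\xi}|)$. Collecting everything and using Young gives the stated bound on $p(v|\tilde v)_{\xi\xi}$ (the cubic term $|\Phi_\xi||\Phi_{\xi\xi}|^2$ is a harmless over-estimate). Finally, since $G$ shares the structural identity of $p(v|\tilde v)_\xi$ with $g'$ replacing $p'$, differentiating it reproduces the identical computation and yields the bound on $G_\xi$. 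The main obstacle throughout is bookkeeping this cancellation in the second-derivative estimate: recognizing that $\partial_\xi$ of a Taylor remainder is again a remainder (quadratically small) plus a $\Phi_{\xi\xi}$-weighted factor, rather than an uncontrolled linear term.
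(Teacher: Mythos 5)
Your proposal is correct and follows essentially the same route as the paper: differentiate the expressions in \eqref{2.6} explicitly, substitute $v_\xi=\Phi_{\xi\xi}+\tilde v_\xi$, and bound each factor by the mean value theorem or a second-order Taylor remainder together with the a priori bound \eqref{3.2} and the boundedness of $\tilde v_\xi,\tilde v_{\xi\xi}$; your $A$, $B$ bookkeeping reproduces term-by-term the paper's expanded formulas for $p(v|\tilde v)_{\xi\xi}$ and $G_\xi$. The only difference is presentational --- you make explicit the Taylor-remainder cancellation that the paper leaves as ``straightforward.''
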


\begin{proof}
  From \eqref{2.6}, a direct calculation gives that
  \begin{equation*}
    \begin{aligned}
      p ( v | \tilde { v } )_{\xi}=&\left(p' ( v ) - p' ( \tilde { v } )\right)\Phi_{\xi\xi}
      +\left(p' ( v ) - p' ( \tilde { v } )-p'' ( \tilde { v } )\Phi_{\xi}\right)\tilde { v }_{\xi},\\
      p ( v | \tilde { v } )_{\xi\xi}=&\left(p' ( v ) - p' ( \tilde { v } )\right)\Phi_{\xi\xi\xi}
      +\left(p' ( v ) - p' ( \tilde { v } )-p'' ( \tilde { v } )\Phi_{\xi}\right)\tilde { v }_{\xi\xi}\\
      &+\left[\left(p'' ( v ) - p'' ( \tilde { v } )\right)\Phi_{\xi\xi}+\left(p'' ( v ) - p'' ( \tilde { v } )\right)\tilde{v}_{\xi}
      +p''(\tilde{v})\Phi_{\xi\xi}\right]\Phi_{\xi\xi}\\
      &+\left[\left(p'' ( v ) - p'' ( \tilde { v } )\right)\Phi_{\xi\xi}+\left(p'' ( v ) - p'' ( \tilde { v } )-p'''(\tilde{v})\Phi_{\xi}\right)\tilde{v}_{\xi}
      \right]\tilde{v}_{\xi},
    \end{aligned}
  \end{equation*}
  and
  \begin{equation*}
    \begin{aligned}
      G=&\left(g'(v)-g'(\tilde{v})\right)\Phi_{\xi\xi}+\left(g'(v)-g'(\tilde{v})-g''(\tilde{v})\Phi_{\xi}\right)\tilde{v}_{\xi},\\
      G_{\xi}=&\left(g'(v)-g'(\tilde{v})\right)\Phi_{\xi\xi\xi}+\left(g'(v)-g'(\tilde{v})-g''(\tilde{v})\Phi_{\xi}\right)\tilde{v}_{\xi\xi}\\
      &+\left[\left(g'' ( v ) - g'' ( \tilde { v } )\right)\Phi_{\xi\xi}+\left(g'' ( v ) - g'' ( \tilde { v } )\right)\tilde{v}_{\xi}
      +g''(\tilde{v})\Phi_{\xi\xi}\right]\Phi_{\xi\xi}\\
      &+\left[\left(g'' ( v ) - g'' ( \tilde { v } )\right)\Phi_{\xi\xi}+\left(g'' ( v ) - g'' ( \tilde { v } )-g'''(\tilde{v})\Phi_{\xi}\right)\tilde{v}_{\xi}
      \right]\tilde{v}_{\xi}.
    \end{aligned}
  \end{equation*}
  By virtue of \eqref{3.2}, the boundedness of $|\tilde{v}_{\xi}|,|\tilde{v}_{\xi\xi}|$, it is straightforward to imply \eqref{3.3} and \eqref{3.4}.
\end{proof}

With the preparations above, we are ready to show the proposition \ref{prop2.1}. First we have the following basic energy estimate.
\begin{lemma}\label{le3.2}
  Under the assumptions of proposition \ref{prop2.1}, it holds that 
  \begin{equation}\label{3.5}
    \begin{aligned}
    & \int_{-\infty}^\infty \left( \Phi^ { 2 } + \Psi^ { 2 } \right) d \xi - s \int _ { 0 } ^ { t } \int_{-\infty}^\infty \left( \frac { 1 } { p ^ { \prime } ( \tilde { v } ) } \right) _ { \xi } \Psi^ { 2 } d \xi d \tau + \int _ { 0 } ^ { t } \int_{-\infty}^\infty \Phi_ { \xi } ^ { 2 } d x d \tau \\
    \leq &C \left\| \left( \Phi_ { 0 } , \Psi_ { 0 } \right) \right\| ^ { 2 } + C \delta \int _ { 0 } ^ { t } \int_{-\infty}^\infty \Phi_ { \xi \xi } ^ { 2 } d \xi d \tau. \end{aligned}
  \end{equation}
\end{lemma}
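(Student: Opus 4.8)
The plan is to run a weighted $L^2$ energy estimate on the anti-derivative system \eqref{2.3}, choosing the multipliers so that the two coupling terms $-\Psi_\xi$ and $p'(\tilde v)\Phi_\xi$ cancel exactly. First I would record that the viscous terms in the first equation of \eqref{2.3} form a divergence, $g'(\tilde v)\Phi_{\xi\xi}+g''(\tilde v)\tilde v_\xi\Phi_\xi=(g'(\tilde v)\Phi_\xi)_\xi$. I would then multiply the first equation by $\Phi$ and the second equation by $-\Psi/p'(\tilde v)$ (note $-1/p'(\tilde v)>0$, and since $\tilde v$ ranges over the compact interval $[v_+,v_-]$ it lies between two positive constants with bounded $\xi$-derivative), and integrate over $\mathbb{R}$.

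Multiplying the first equation by $\Phi$ and integrating by parts turns $-\Phi(g'(\tilde v)\Phi_\xi)_\xi$ into the dissipation $\int g'(\tilde v)\Phi_\xi^2$, while $-s\Phi\Phi_\xi=-\tfrac{s}{2}(\Phi^2)_\xi$ integrates to zero, leaving the coupling term $-\int\Phi\Psi_\xi$. Multiplying the second equation by $-\Psi/p'(\tilde v)$ produces the energy $\tfrac{d}{dt}\int \Psi^2/(-2p'(\tilde v))$, and its transport term, after one integration by parts, yields exactly $-\tfrac{s}{2}\int (1/p'(\tilde v))_\xi\Psi^2$; the remaining coupling term is $-\int\Psi\Phi_\xi$. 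Adding the two identities, the coupling terms combine into $-\int(\Phi\Psi)_\xi=0$, which is the cancellation the reformulation was designed to produce. Since $g'(\tilde v)=\mu(\tilde v)/\tilde v\ge c_0>0$, the dissipation controls $c_0\int\Phi_\xi^2$, and since $-1/p'(\tilde v)$ is bounded below, the energy dominates $c\int(\Phi^2+\Psi^2)$.

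For the right-hand side I would invoke Lemma \ref{le3.1}. Using $p(v|\tilde v)\le C\Phi_\xi^2$ with the a priori bound $\|\Psi\|_{L^\infty}\le C\delta$ gives $|\int p(v|\tilde v)\Psi/p'(\tilde v)|\le C\delta\int\Phi_\xi^2$. For $\int\Phi G$, the bound $|G|\le C(|\Phi_\xi||\Phi_{\xi\xi}|+|\tilde v_\xi||\Phi_\xi|^2)$ together with $\|\Phi\|_{L^\infty}\le C\delta$ yields, after Young's inequality, $|\int\Phi G|\le C\delta\int\Phi_\xi^2+C\delta\int\Phi_{\xi\xi}^2$. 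Integrating the differential identity in time and absorbing the $C\delta\int\Phi_\xi^2$ contributions into the dissipation for $\delta$ small enough produces \eqref{3.5}, with the leftover $C\delta\int_0^t\int\Phi_{\xi\xi}^2$ on the right-hand side.

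The main obstacle is precisely that last term: the part of $\int\Phi G$ coming from $(g'(v)-g'(\tilde v))\Phi_{\xi\xi}$ carries a second derivative $\Phi_{\xi\xi}$ that the zeroth-order dissipation $\int\Phi_\xi^2$ cannot absorb, so it cannot be closed at this level and must be carried to the right-hand side of \eqref{3.5}, to be controlled later by the higher-order estimates. A secondary check is the sign of the retained good term: since $s<0$, $p''\ge 0$ and $\tilde v_\xi=B(\tilde v)<0$, one has $(1/p'(\tilde v))_\xi=-p''(\tilde v)\tilde v_\xi/(p'(\tilde v))^2\ge 0$, so $-s\int(1/p'(\tilde v))_\xi\Psi^2\ge 0$ and legitimately remains on the left.
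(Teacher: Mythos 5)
Your proposal is correct and is essentially the paper's own argument: the same multipliers $\Phi$ and $-\Psi/p'(\tilde v)$, the same key cancellation (the viscous terms being the divergence $(g'(\tilde v)\Phi_\xi)_\xi$, so the problematic $g''(\tilde v)\tilde v_\xi\Phi_\xi\Phi$ term never appears), the same sign observation for $-\tfrac{s}{2}\left(\tfrac{1}{p'(\tilde v)}\right)_\xi\Psi^2$, and the same use of Lemma \ref{le3.1} to absorb $C\delta\int\Phi_\xi^2$ into the dissipation while carrying $C\delta\int_0^t\int\Phi_{\xi\xi}^2$ to the right-hand side. No gaps; your extra remarks on the positive lower bounds of $g'(\tilde v)$ and $-1/p'(\tilde v)$ only make explicit what the paper leaves implicit.
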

\begin{proof}
  Multiply $\eqref{2.3}_1$ and $\eqref{2.3}_2$ by $\Phi$ and $-\frac{\Psi}{p'(\tilde{v})}$ respectively and sum over the results, we have
  \begin{equation}\label{3.6}
    \begin{aligned}
     &\left(\frac{\Phi^2}{2}-\frac { \Psi^ { 2 } } { 2p ^ { \prime } ( \tilde { v } ) }\right)_{t}
      +\left(-\frac{s}{2}\Phi^2-\Phi\Psi-g'(\tilde{v})\Phi\Phi_{\xi}+\frac{s}{2p ' ( \tilde { v } )}\Psi^2\right)_{\xi}\\
      +&g'(\tilde{v})\Phi_{\xi}^2-\frac{s}{2}\left(\frac{1}{p ' ( \tilde { v } )}\right)_{\xi}\Psi^2
      =G\Phi+\frac{p(v|\tilde{v})}{p'(\tilde{v})}\Psi.
    \end{aligned}
  \end{equation}
 It should be noted that there is cancellation in \eqref{3.6} for  $g''(\tilde{v})\tilde{v}_\xi\Phi_\xi\Phi$ which is difficult to control. This is a key observation for the general viscosity $\mu(v)$. Note also that 
  \begin{equation}\label{3.7}
    g'(\tilde{v})=\frac{\mu(\tilde{v})}{\tilde{v}}>0,\qquad
    -\frac{s}{2}\left(\frac{1}{p'(\tilde{v})}\right)_{\xi}=\frac{p''(\tilde{v})}{2\left(p'(\tilde{v})\right)^2}s\tilde{v}_{\xi}\geq0.
  \end{equation}
  Utilize the a priori assumption \eqref{3.1} and Lemma \ref{le3.1}, we obtain that
  \begin{equation}\label{3.8}
    \begin{aligned}
      \left|G\Phi+\frac{p(v|\tilde{v})}{p'(\tilde{v})}\Psi\right|
      &\leq C\left(| \Phi_{ \xi }| | \Phi_{\xi\xi }|+|\tilde { v } _ { \xi } | |  \Phi_ { \xi } | ^ { 2 } \right)|\Phi|+C\|\Psi\|_{L^{\infty}}| \Phi_ { \xi } |^2\\
      &\leq C\|(\Phi,\Psi)\|_{L^{\infty}}(| \Phi_ { \xi } |^2+| \Phi_ { \xi \xi } |^2)\\
      &\leq C\delta(| \Phi_ { \xi } |^2+| \Phi_ { \xi \xi } |^2).
    \end{aligned}
  \end{equation}
  Integrate \eqref{3.6} with respect to $t$ and $\xi$ over $(0, t)\times \mathbb{R}$,  make use of \eqref{3.7}-\eqref{3.8} and choose $\delta$ small enough, we can obtain \eqref{3.5} directly. Thus the proof of Lemma \ref{le3.2} is completed.
\end{proof}

Next we estimate the term $\int _ { 0 } ^ { t } \int_{-\infty}^\infty \Phi_ { \xi \xi } ^ { 2 } d \xi d \tau$.
\begin{lemma}\label{le3.3}
  Under the assumptions of proposition \ref{prop2.1}, it holds that
  \begin{equation}\label{3.9}
    \| ( \Phi, \Psi) ( t ) \| _ { 1 } ^ { 2 } + \int _ { 0 } ^ { t } \left\| \Phi_ { \xi } ( \tau ) \right\| _ { 1 } ^ { 2 } d \tau
    \leq C \left\| \left( \Phi_ { 0 } , \Psi_ { 0 } \right) \right\| _ { 1 } ^ { 2 }
  \end{equation}
\end{lemma}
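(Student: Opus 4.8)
The plan is to upgrade the basic $L^2$ estimate of Lemma \ref{le3.2} to an $H^1$ estimate by performing an energy estimate on the once-differentiated system, thereby producing a positive dissipation term $\int_0^t\int \Phi_{\xi\xi}^2\,d\xi d\tau$ that absorbs the bad term $C\delta\int_0^t\int\Phi_{\xi\xi}^2$ appearing on the right-hand side of \eqref{3.5}. First I would differentiate the system \eqref{2.3} once in $\xi$. The natural test functions, mirroring the choice in Lemma \ref{le3.2}, are to multiply $\partial_\xi\eqref{2.3}_1$ by $\Phi_\xi$ and $\partial_\xi\eqref{2.3}_2$ by $-\frac{\Psi_\xi}{p'(\tilde v)}$ and add, integrating over $(0,t)\times\mathbb R$. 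The same structural cancellation observed after \eqref{3.6} should recur at the differentiated level, so that the principal parabolic part of the first equation yields the good dissipation $g'(\tilde v)\Phi_{\xi\xi}^2=\frac{\mu(\tilde v)}{\tilde v}\Phi_{\xi\xi}^2\ge c\,\Phi_{\xi\xi}^2$, while the transport-type term on the second equation again contributes a nonnegative boundary-in-$\tilde v$ term of the form $-\frac{s}{2}\left(\frac1{p'(\tilde v)}\right)_\xi\Psi_\xi^2\ge0$ by the sign computation in \eqref{3.7}.

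Next I would estimate the resulting error terms, which fall into two classes. The first class consists of the nonlinear remainders: differentiating $G$ and $p(v|\tilde v)$ produces $G_\xi$ and $p(v|\tilde v)_\xi$, whose pointwise bounds are exactly \eqref{3.4} and \eqref{3.3} from Lemma \ref{le3.1}. After multiplication by the test functions and use of the a priori smallness \eqref{3.2}, each such term is controlled by $C\delta\bigl(\Phi_\xi^2+\Phi_{\xi\xi}^2+\Phi_{\xi\xi\xi}^2\bigr)$; the top-order piece $|\Phi_\xi||\Phi_{\xi\xi\xi}|$ must be split by Young's inequality, putting a small multiple of $\Phi_{\xi\xi\xi}^2$ on the left (to be absorbed once the higher-order estimate is available) or, more cleanly, integrated by parts to move a derivative off $\Phi_{\xi\xi\xi}$ onto $\Phi_\xi$ so that only $\Phi_{\xi\xi}^2$ and $\Phi_\xi^2$ remain. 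The second class consists of linear commutator terms generated by the $\xi$-dependent coefficients $g'(\tilde v),g''(\tilde v)\tilde v_\xi$ and $p'(\tilde v)$; these carry factors of $\tilde v_\xi,\tilde v_{\xi\xi}$, which are bounded and integrable by Theorem \ref{existence}, so they are dominated by $C\bigl(\Phi_\xi^2+\Psi_\xi^2\bigr)$ plus a small fraction of the dissipation.

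I would then add the resulting differentiated estimate to \eqref{3.5} with a suitable weight. Choosing $\delta$ small enough, the combined left-hand dissipation $\bigl(c-C\delta\bigr)\int_0^t\int\Phi_{\xi\xi}^2$ stays strictly positive and simultaneously absorbs the $C\delta\int_0^t\int\Phi_{\xi\xi}^2$ on the right of \eqref{3.5}, while the $\Phi_\xi^2$ dissipation from \eqref{3.5} controls the lower-order error $C\Phi_\xi^2$ from the differentiated level. Collecting terms yields precisely \eqref{3.9}. The main obstacle I anticipate is the top-order term $|\Phi_\xi||\Phi_{\xi\xi\xi}|$ in the bound for $G_\xi$: at this stage no $\int\Phi_{\xi\xi\xi}^2$ dissipation is yet available within the $H^1$ estimate itself, so one cannot naively absorb it by Young's inequality. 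The clean resolution is integration by parts in $\xi$ to transfer the derivative, exploiting the parabolic structure of $\eqref{2.3}_1$; this trades the dangerous $\Phi_{\xi\xi\xi}$ for controllable $\Phi_{\xi\xi}^2$ at the cost of extra $\tilde v_\xi$-weighted terms, which are harmless. Confirming that this integration by parts does not reintroduce any uncontrolled term of the type cancelled in \eqref{3.6} is the delicate point to check carefully.
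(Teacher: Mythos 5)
Your overall skeleton is essentially the paper's: the paper multiplies the \emph{undifferentiated} equations $\eqref{2.3}_1$, $\eqref{2.3}_2$ by $-\Phi_{\xi\xi}$ and $\Psi_{\xi\xi}/p'(\tilde v)$ (see \eqref{3.10}), which is exactly your ``differentiate once, test with $\Phi_\xi$ and $-\Psi_\xi/p'(\tilde v)$'' after one integration by parts; the cross terms again cancel as the exact derivative $(\Phi_\xi\Psi_\xi)_\xi$, and the final step (add a large multiple of \eqref{3.5}, take $\delta$ small) is the same. Your anticipated ``main obstacle'' is also a non-issue, resolved exactly as you say: one never needs the $|\Phi_\xi||\Phi_{\xi\xi\xi}|$ piece of the $G_\xi$ bound \eqref{3.4}, since integrating by parts (equivalently, testing with $-\Phi_{\xi\xi}$ directly) leaves only $G\Phi_{\xi\xi}$, controlled by \eqref{3.4}$_1$ and the smallness \eqref{3.2}.

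The genuine gap is in your second class of error terms. The commutator produced by $(p'(\tilde v)\Phi_\xi)_\xi$ against $-\Psi_\xi/p'(\tilde v)$ is $-\frac{p''(\tilde v)\tilde v_\xi}{p'(\tilde v)}\Phi_\xi\Psi_\xi$, and you propose to dominate it by $C(\Phi_\xi^2+\Psi_\xi^2)$ plus a small fraction of the dissipation. But at this stage there is \emph{no} dissipation on $\Psi_\xi$ with a uniform weight: the only nonnegative $\Psi_\xi$ term in the scheme is the degenerate one, $-\frac{s}{2}\bigl(\frac{1}{p'(\tilde v)}\bigr)_\xi\Psi_\xi^2=\frac{p''(\tilde v)}{2(p'(\tilde v))^2}s\tilde v_\xi\Psi_\xi^2\ge 0$ from \eqref{3.7}, whose weight $p''(\tilde v)\tilde v_\xi$ vanishes wherever $p''$ does (the pressure is not assumed strictly convex) and decays as $\xi\to\pm\infty$; and the bound $\int_0^t\|\Psi_\xi\|^2d\tau\le C$ is Lemma \ref{le3.4}, which is proved \emph{after} and \emph{using} Lemma \ref{le3.3}, so invoking it here is circular. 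An error term $C\int_0^t\int\Psi_\xi^2\,d\xi d\tau$ with uniform coefficient could only be closed by Gronwall, giving exponential growth in $t$ rather than \eqref{3.9}. What saves the argument is structural, not a size estimate: the coefficient of this cross term is exactly $p'(\tilde v)\bigl(\frac{1}{p'(\tilde v)}\bigr)_\xi$, i.e., it carries the \emph{same} degenerate weight as the good term, so a weighted Cauchy--Schwarz absorbs $\epsilon\,\bigl|\bigl(\frac{1}{p'(\tilde v)}\bigr)_\xi\bigr|\Psi_\xi^2$ into that term and leaves only $C\Phi_\xi^2$ --- this is precisely the paper's computation \eqref{3.12}--\eqref{3.13}. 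Your proposal passes over this point, and it is the crux of the lemma (and of why the scheme built on the new variable $h$ works for non-convex $p$ at all); as written, the blanket bound $C(\Phi_\xi^2+\Psi_\xi^2)$ would leave the estimate unclosable.
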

 \begin{proof}
   Multiply $\eqref{2.3}_1$ and $\eqref{2.3}_2$ by $-\Phi_{\xi\xi}$ and $\frac{\Psi_{\xi\xi}}{p'(\tilde{v})}$ respectively and sum over the results, we have
   \begin{equation}\label{3.10}
    \begin{aligned}
     &\left(\frac{\Phi_{\xi}^2}{2}-\frac { \Psi_{\xi} ^ { 2 } } { 2p ^ { \prime } ( \tilde { v } ) }\right)_{t}
      +\left(\frac{s}{2}\Phi_{\xi}^2-\Phi_t\Phi_{\xi}+\Phi_{\xi}\Psi_{\xi}+\frac{\Psi_t\Psi_{\xi}}{p'(\tilde{v})}-\frac{s}{2p ' ( \tilde { v } )}\Psi_{\xi}^2\right)_{\xi}\\
      &+g'(\tilde{v})\Phi_{\xi\xi}^2-\frac{s}{2}\left(\frac{1}{p ' ( \tilde { v } )}\right)_{\xi}\Psi_{\xi}^2\\
      =&-G\Phi_{\xi\xi}-g''(\tilde { v })\tilde { v }_{\xi}\Phi_{\xi}\Phi_{\xi\xi}-\frac{p(v|\tilde{v})}{p'(\tilde{v})}\Psi_{\xi\xi}
      -\left(\frac{1}{p ' ( \tilde { v } )}\right)_{\xi}(p(v)-p(\tilde{v}))\Psi_{\xi}.
    \end{aligned}
  \end{equation}
  Note that
  \begin{equation}\label{3.11}
    \begin{aligned}
      \left|G\Phi_{\xi\xi}+g''(\tilde { v })\tilde { v }_{\xi}\Phi_{\xi}\Phi_{\xi\xi}\right|
      \leq&C \left( | \Phi_ { \xi }| | \Phi_ { \xi \xi } | + | \tilde { v } _ { \xi }| | \Phi_ { \xi } |^2 \right)|\Phi_{\xi\xi}|+C|\Phi_{\xi}||\Phi_{\xi\xi}| \\
      \leq& (\varepsilon+C\delta)|\Phi_{\xi\xi}|^2+C|\Phi_{\xi}|^2,
    \end{aligned}
  \end{equation}
  \begin{equation}\label{3.12}
    \begin{aligned}
      &\frac{p(v|\tilde{v})}{p'(\tilde{v})}\Psi_{\xi\xi}+\left(\frac{1}{p ' ( \tilde { v } )}\right)_{\xi}(p(v)-p(\tilde{v}))\Psi_{\xi}\\
      =&\left(\frac{p(v|\tilde{v})}{p'(\tilde{v})}\Psi_{\xi}\right)_{\xi}
      +\left(\frac{1}{p'(\tilde { v } )}\right)_{\xi}(p(v)-p(\tilde{v})-p(v|\tilde{v}))\Psi_{\xi}
      -\frac{p(v|\tilde{v})_{\xi}}{p'(\tilde { v })}\Psi_{\xi}\\
      =&\left(\frac{p(v|\tilde{v})}{p'(\tilde{v})}\Psi_{\xi}\right)_{\xi}+\left(\frac{1}{p'(\tilde { v } )}\right)_{\xi}p'(\tilde { v })\Phi_{\xi}\Psi_{\xi}
      -\frac{p(v|\tilde{v})_{\xi}}{p'(\tilde { v })}\Psi_{\xi},
    \end{aligned}
  \end{equation}
and
  \begin{equation}\label{3.13}
    \begin{aligned}
      \left(\frac{1}{p'(\tilde { v } )}\right)_{\xi}p'(\tilde { v })\Phi_{\xi}\Psi_{\xi}
      \leq&-\frac{s}{4}\left(\frac{1}{p ' ( \tilde { v } )}\right)_{\xi}\Psi_{\xi}^2+C|\Phi_{\xi}|^2,\\
      \left|\frac{p(v|\tilde{v})_{\xi}}{p'(\tilde { v })}\Psi_{\xi}\right|
      \leq&C\left(|\Phi_{\xi\xi} | |\Phi_{\xi}| + |\tilde{v}_\xi |\Phi_{\xi}^{2}\right)|\Psi_{\xi}|
      \leq C\delta\left(|\Phi_{\xi\xi} |^2+ |\Phi_{\xi}|^2 \right).
    \end{aligned}
  \end{equation}
Integrate \eqref{3.10} with respect to $t$ and $\xi$ over $(0,t)\times \mathbb{R}$, make use of \eqref{3.11}-\eqref{3.13}, and choose $\varepsilon$ and $\delta$ small enough, we obtain that
  \begin{equation}\label{3.14}
    \begin{aligned}
    & \int_{-\infty}^\infty\left(\Phi_{\xi}^{2}+\Psi_{\xi}^{2}\right)d\xi-s\int_{0}^{t}\int_{-\infty}^\infty\left(\frac{1}{p'(\tilde{v})}\right)_{\xi}\Psi_{\xi}^{2}d\xi d\tau
    +\int_{0}^{t}\int_{-\infty}^\infty \Phi_{\xi\xi}^{2}dxd\tau \\
    \leq &C\left\|\left(\Phi_{0,\xi} , \Psi_{0,\xi} \right) \right\|^{ 2 } + C \int_{0}^{t} \int_{-\infty}^\infty \Phi_{\xi}^{2} d\xi d\tau.
    \end{aligned}
  \end{equation}
  Multiply \eqref{3.5} by a large constant, add \eqref{3.14} and choose  $\delta$ small enough, we can obtain \eqref{3.9} immediately. Lemma \ref{le3.3} is completed.
 \end{proof}

 Since $|\tilde{v}_{\xi}|$ may not have lower bound, we need to estimate the term $\int_{0}^{t}\|\Psi_{\xi}\|^{2} d\tau$.
 \begin{lemma}\label{le3.4}
   Under the assumptions of proposition \ref{prop2.1}, it holds that
   \begin{equation}\label{3.15}
     \int_{0}^{t}\|\Psi_{\xi}\|^{2}d\tau\leq C\left\| \left(\Phi_{0},\Psi_{0}\right)\right\|_{1}^{2}.
   \end{equation}
 \end{lemma}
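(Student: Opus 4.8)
The plan is to recover the missing dissipation for $\Psi_{\xi}$ by exploiting the first (parabolic) equation of \eqref{2.3} as an algebraic expression for $\Psi_{\xi}$. Concretely, I would multiply $\eqref{2.3}_1$ by $-\Psi_{\xi}$ and integrate over $\mathbb{R}$. Since the $\Psi_{\xi}$ term in $\eqref{2.3}_1$ carries coefficient $-1$, this immediately produces the favorable quantity $\int \Psi_{\xi}^{2}\,d\xi$ on the left, at the cost of a collection of cross terms:
\begin{equation*}
\int \Psi_{\xi}^{2}\,d\xi = \int \Phi_{t}\Psi_{\xi}\,d\xi - s\int \Phi_{\xi}\Psi_{\xi}\,d\xi - \int g'(\tilde v)\Phi_{\xi\xi}\Psi_{\xi}\,d\xi - \int g''(\tilde v)\tilde v_{\xi}\Phi_{\xi}\Psi_{\xi}\,d\xi - \int G\Psi_{\xi}\,d\xi.
\end{equation*}
The terms involving $\Phi_{\xi\xi}\Psi_{\xi}$, $\tilde v_{\xi}\Phi_{\xi}\Psi_{\xi}$ and $G\Psi_{\xi}$ are harmless: using the boundedness of $g'(\tilde v),g''(\tilde v),\tilde v_{\xi}$, the bound $|G|\le C(|\Phi_{\xi}||\Phi_{\xi\xi}|+|\tilde v_{\xi}||\Phi_{\xi}|^{2})$ from Lemma \ref{le3.1}, the $L^{\infty}$ smallness \eqref{3.2}, and Young's inequality, each is absorbed as $\varepsilon\|\Psi_{\xi}\|^{2}+C_{\varepsilon}(\|\Phi_{\xi}\|^{2}+\|\Phi_{\xi\xi}\|^{2})$.

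The main obstacle is the term $\int \Phi_{t}\Psi_{\xi}\,d\xi$, which contains a time derivative of $\Phi$ for which no direct estimate is available. The key step is to integrate by parts in $\xi$ and trade it for a total time derivative:
\begin{equation*}
\int \Phi_{t}\Psi_{\xi}\,d\xi = -\frac{d}{dt}\int \Phi_{\xi}\Psi\,d\xi + \int \Phi_{\xi}\Psi_{t}\,d\xi,
\end{equation*}
and then to eliminate $\Psi_{t}$ using the hyperbolic equation $\eqref{2.3}_2$, namely $\Psi_{t}=s\Psi_{\xi}-p'(\tilde v)\Phi_{\xi}-p(v|\tilde v)$. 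This yields $\int\Phi_{\xi}\Psi_{t}\,d\xi = s\int\Phi_{\xi}\Psi_{\xi}\,d\xi - \int p'(\tilde v)\Phi_{\xi}^{2}\,d\xi - \int \Phi_{\xi}\,p(v|\tilde v)\,d\xi$. The fortunate point is that the resulting $s\int\Phi_{\xi}\Psi_{\xi}$ cancels exactly against the $-s\int\Phi_{\xi}\Psi_{\xi}$ already present, removing the remaining indefinite cross term. The leftover $-\int p'(\tilde v)\Phi_{\xi}^{2}$ is bounded by $C\|\Phi_{\xi}\|^{2}$, and $\int\Phi_{\xi}\,p(v|\tilde v)$ is controlled by $C\delta\|\Phi_{\xi}\|^{2}$ via $|p(v|\tilde v)|\le C\Phi_{\xi}^{2}$.

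Collecting everything and choosing $\varepsilon$ (and $\delta$) small to absorb the $\varepsilon\|\Psi_{\xi}\|^{2}$ contributions into the left-hand side, I obtain an inequality of the form
\begin{equation*}
\frac{1}{2}\|\Psi_{\xi}\|^{2} + \frac{d}{dt}\int \Phi_{\xi}\Psi\,d\xi \le C\big(\|\Phi_{\xi}\|^{2}+\|\Phi_{\xi\xi}\|^{2}\big).
\end{equation*}
Integrating in $\tau$ over $(0,t)$, the time-derivative term produces boundary contributions $\int\Phi_{\xi}\Psi\,d\xi$ at $\tau=t$ and $\tau=0$, each estimated by $\frac12(\|\Phi_{\xi}\|^{2}+\|\Psi\|^{2})\le C\|(\Phi,\Psi)\|_{1}^{2}$, which is $\le C\|(\Phi_{0},\Psi_{0})\|_{1}^{2}$ by Lemma \ref{le3.3}. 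The right-hand side integrates to $C\int_{0}^{t}\|\Phi_{\xi}\|_{1}^{2}\,d\tau \le C\|(\Phi_{0},\Psi_{0})\|_{1}^{2}$, again by \eqref{3.9}. This delivers \eqref{3.15}. The only delicate point throughout is that no lower bound on $|\tilde v_{\xi}|$ is used: all $\tilde v_{\xi}$-weighted terms are placed on the side controlled by the already-established $\Phi_{\xi}$ and $\Phi_{\xi\xi}$ dissipation, so the weak weighted dissipation of $\Psi_{\xi}$ in \eqref{3.5} and \eqref{3.14} is never relied upon.
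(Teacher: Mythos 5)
Your proof is correct and follows essentially the same route as the paper: both test $\eqref{2.3}_1$ with $\Psi_{\xi}$, convert the cross term $\int\Phi_{t}\Psi_{\xi}\,d\xi$ into a total time derivative of a cross functional (your $-\int\Phi_{\xi}\Psi\,d\xi$ equals the paper's $\int\Phi\Psi_{\xi}\,d\xi$ after integration by parts) plus terms eliminated via $\eqref{2.3}_2$, exploit the exact cancellation of $s\int\Phi_{\xi}\Psi_{\xi}\,d\xi$, and close with the $\Phi_{\xi},\Phi_{\xi\xi}$ dissipation and uniform bounds of Lemma \ref{le3.3}.
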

\begin{proof}
  Multiplying $\eqref{2.3}_1$ by $\Psi_{\xi}$ and make use of $\eqref{2.3}_2$, we have
  \begin{equation}\label{3.16}
    \begin{aligned}
      \Psi_{\xi}^2=& (\Phi\Psi_{\xi})_t+\left[(p(v)-p(\tilde{v}))\Phi-s\Phi\Psi_{\xi}\right]_{\xi}
      -(p(v)-p(\tilde{v}))\Phi_{\xi}-g'(\tilde{v})\Phi_{\xi\xi}\Psi_{\xi}
      -G\Psi_{\xi}.
    \end{aligned}
  \end{equation}
Note that
  \begin{equation}\label{3.17}
    \begin{aligned}
      &\left|(p(v)-p(\tilde{v}))\Phi_{\xi}+g'(\tilde{v})\Phi_{\xi\xi}\Psi_{\xi}+G\Psi_{\xi}\right|\\
      \leq& \frac{1}{2}\Psi_{\xi}^2+C(|\Phi_{\xi}|^2+|\Phi_{\xi\xi}|^2+|G|^2)
      \leq\frac{1}{2}\Psi_{\xi}^2+C(|\Phi_{\xi}|^2+|\Phi_{\xi\xi}|^2).
    \end{aligned}
  \end{equation}
Integrate \eqref{3.16} with respect to $t$ and $\xi$ over $(0,t)\times \mathbb{R}$ and with the help of \eqref{3.17}, we have
  \begin{equation}\label{3.18}
    \begin{aligned}
      \int_{0}^{t}\|\Psi_{\xi}\|^{2}d\tau\leq C\|(\Phi,\Psi_{\xi})\|^2+C\|(\Phi_0,\Psi_{0,\xi})\|^2+\int_{0}^{t}\|\Phi_{\xi}\|_1^{2}d\tau,
    \end{aligned}
  \end{equation}
  where we have used the fact that
  \begin{equation*}
    \int_{0}^{t}\int_{-\infty}^\infty(\Phi\Psi_{\xi})_\tau d\xi d\tau=\int(\Phi\Psi_{\xi}-\Phi_0\Psi_{0,\xi})d\xi\leq \|(\Phi,\Psi_{\xi})\|^2+\|(\Phi_0,\Psi_{0,\xi})\|^2.
  \end{equation*}
Lemma \ref{le3.4} is completed from Lemma \ref{le3.3} and \eqref{3.18}.
\end{proof}

\begin{lemma}\label{le3.5}
  Under the assumptions of proposition \ref{prop2.1}, it holds that
   \begin{equation}\label{3.19}
     \left\| \left( \Phi_ { \xi \xi } , \Psi_ { \xi \xi } \right) ( t ) \right\| ^ { 2 }
     + \int _ { 0 } ^ { t } \int \Phi_ { \xi \xi \xi } ^ { 2 } d \xi d \tau
     \leq C \left\| \left( \Phi_ { 0 } , \Psi_ { 0 } \right) \right\| _ { 2 } ^ { 2 }.
   \end{equation}
\end{lemma}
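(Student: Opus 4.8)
The plan is to obtain the second-order estimate by the same weighted multiplier scheme as in Lemmas \ref{le3.2} and \ref{le3.3}, but applied to the once-differentiated system. First I would differentiate both equations of \eqref{2.3} once in $\xi$, producing evolution equations for $(\Phi_\xi,\Psi_\xi)$ whose principal part is structurally identical to \eqref{2.3}: parabolic diffusion $(g'(\tilde v)\Phi_{\xi\xi})_\xi$ in the first, hyperbolic coupling $(p'(\tilde v)\Phi_\xi)_\xi$ in the second. I would then multiply the $\xi$-derivative of $\eqref{2.3}_1$ by $-\Phi_{\xi\xi\xi}$ and the $\xi$-derivative of $\eqref{2.3}_2$ by $\Psi_{\xi\xi\xi}/p'(\tilde v)$ and add. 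Exactly as in \eqref{3.6} and \eqref{3.10}, the time terms integrate by parts to the positive energy density $\tfrac12\Phi_{\xi\xi}^2-\tfrac{1}{2p'(\tilde v)}\Psi_{\xi\xi}^2$ (positive since $p'<0$), the parabolic term yields the target dissipation $g'(\tilde v)\Phi_{\xi\xi\xi}^2\ge0$, and the nonnegative term $-\tfrac{s}{2}(1/p'(\tilde v))_\xi\Psi_{\xi\xi}^2\ge0$ reappears as in \eqref{3.7}. The key algebraic point, mirroring \eqref{3.6}, is that the coupling contributions combine into a pure flux, $\Psi_{\xi\xi}\Phi_{\xi\xi\xi}+\Phi_{\xi\xi}\Psi_{\xi\xi\xi}=(\Phi_{\xi\xi}\Psi_{\xi\xi})_\xi$, and integrate away.

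Next I would integrate over $(0,t)\times\mathbb{R}$ and estimate the right-hand side. The terms created by differentiating the variable viscosity, namely those from $(g''(\tilde v)\tilde v_\xi\Phi_\xi)_\xi$ and the cross term $g''(\tilde v)\tilde v_\xi\Phi_{\xi\xi}\Phi_{\xi\xi\xi}$ produced when integrating the diffusion by parts, are all lower order: using the boundedness of $\tilde v_\xi,\tilde v_{\xi\xi}$ and Young's inequality they are dominated by $(\varepsilon+C\delta)\Phi_{\xi\xi\xi}^2+C(\Phi_\xi^2+\Phi_{\xi\xi}^2)$, precisely as in \eqref{3.11}; here no special cancellation is needed, since a controlled lower-order integral is already available. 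The genuinely nonlinear contributions from $G_\xi$ and $p(v|\tilde v)_\xi$ are handled through the pointwise bounds \eqref{3.3}--\eqref{3.4} of Lemma \ref{le3.1}. The only top-order piece is $|\Phi_\xi|\,|\Phi_{\xi\xi\xi}|$ inside $G_\xi$; multiplied by $\Phi_{\xi\xi\xi}$ it becomes $|\Phi_\xi|\,\Phi_{\xi\xi\xi}^2\le C\delta\,\Phi_{\xi\xi\xi}^2$ and is absorbed into the dissipation, while the remaining quartic pieces such as $\Phi_{\xi\xi}^2|\Phi_{\xi\xi\xi}|$ reduce, via $\|\Phi_{\xi\xi}\|_{L^\infty}\le C\delta$, to $\varepsilon\Phi_{\xi\xi\xi}^2+C\delta^2\Phi_{\xi\xi}^2$.

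The delicate step, which I expect to be the main obstacle, is the treatment of the $\Psi$-contributions, because \eqref{3.19} provides no dissipation for $\Psi$: any term $\int_0^t\!\int\Psi_{\xi\xi}^2$ with a fixed positive coefficient would be fatal, as Gronwall would then force growth in $t$ and destroy the $T$-independence. The remedy, already visible in \eqref{3.13}, is to never treat $\Psi_{\xi\xi}$ as an $L^2$ quantity demanding dissipation, but always as a small $L^\infty$ multiplier: by Sobolev embedding and the a priori assumption one has $\|\Psi_\xi\|_{L^\infty}+\|\Psi_{\xi\xi}\|_{L^\infty}\le C\delta$. Concretely, after multiplying the differentiated $\eqref{2.3}_2$ by $\Psi_{\xi\xi\xi}/p'(\tilde v)$ I would integrate by parts once in $\xi$ to trade $\Psi_{\xi\xi\xi}$ for $\Psi_{\xi\xi}$, substitute $\Psi_{\xi t}$ from the differentiated second equation to remove the mixed term $(1/p'(\tilde v))_\xi\Psi_{\xi t}\Psi_{\xi\xi}$, and then bound the forcing $p(v|\tilde v)_\xi/p'(\tilde v)$ and the coefficient-derivative terms by pulling out the $L^\infty$-small factor $\|\Psi_{\xi\xi}\|_{L^\infty}\le C\delta$ and the bound $|p(v|\tilde v)_{\xi\xi}|$ from \eqref{3.3}. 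This throws the remaining $L^2$ weight onto $\Phi_{\xi\xi\xi}$ (absorbed with a small constant) and onto $\Phi_\xi,\Phi_{\xi\xi}$, while the residual $\tilde v_\xi$-localized pieces are absorbed by the nonnegative weighted term $-\tfrac{s}{2}(1/p'(\tilde v))_\xi\Psi_{\xi\xi}^2$.

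Finally I would collect terms. Choosing $\varepsilon$ and $\delta$ small, the inequality reduces to $\|(\Phi_{\xi\xi},\Psi_{\xi\xi})(t)\|^2+\int_0^t\!\int\Phi_{\xi\xi\xi}^2\,d\xi d\tau\le C\|(\Phi_{0,\xi\xi},\Psi_{0,\xi\xi})\|^2+C\int_0^t\!\int(\Phi_\xi^2+\Phi_{\xi\xi}^2)\,d\xi d\tau$, and the last integral equals $C\int_0^t\|\Phi_\xi\|_1^2\,d\tau$, already bounded by $C\|(\Phi_0,\Psi_0)\|_1^2$ in Lemmas \ref{le3.2} and \ref{le3.3} via \eqref{3.9}. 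Combining these (as in the passage from \eqref{3.14} to \eqref{3.9}, if necessary after weighting the lower-order estimates by a large constant and adding) yields \eqref{3.19}. The only genuine subtlety is the one flagged above: keeping every $\Psi$-derivative at the level of an $L^\infty$-small multiplier so that no uncontrolled $\Psi_{\xi\xi}$ dissipation is ever required.
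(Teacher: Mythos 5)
Your overall scheme -- differentiate \eqref{2.3} once, use the multipliers $-\Phi_{\xi\xi\xi}$ and $\Psi_{\xi\xi\xi}/p'(\tilde v)$, cancel the leading coupling as a flux, and absorb localized terms into the nonnegative quantity $-\tfrac{s}{2}(1/p'(\tilde v))_\xi\Psi_{\xi\xi}^2$ -- is, up to one integration by parts, the same as the paper's (which differentiates twice and multiplies by $\Phi_{\xi\xi}$ and $\Psi_{\xi\xi}$), and most of your estimates go through. But there is a genuine gap in the treatment of the $\Psi$-contributions. The coupling term of the differentiated second equation is not just $\Phi_{\xi\xi}\Psi_{\xi\xi\xi}$: multiplying $(p'(\tilde v)\Phi_\xi)_\xi$ by $\Psi_{\xi\xi\xi}/p'(\tilde v)$ also produces the commutator $\frac{p''(\tilde v)\tilde v_\xi}{p'(\tilde v)}\Phi_\xi\Psi_{\xi\xi\xi}$, and since you have no control on $\Psi_{\xi\xi\xi}$ this must be integrated by parts, leaving a term of the form $\int_0^t\!\int w(\xi)\,\Phi_\xi\Psi_{\xi\xi}\,d\xi d\tau$ whose weight $w$ contains $p'''(\tilde v)\tilde v_\xi^2$ -- the exact analogue of the term $I_5$ (with weight $(1/p'(\tilde v))_{\xi\xi}$) in the paper's proof. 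Neither of your two mechanisms handles it. Absorption into the weighted good term fails because that term's weight is of size $p''(\tilde v)|\tilde v_\xi|$, and the hypotheses \eqref{pressure} allow $p''$ to vanish at some $v_0\in(v_+,v_-)$: if, say, $p''\sim (v-v_0)^2$ near $v_0$, then $|p'''|\sim|v-v_0|$ while $|\tilde v_\xi|=|B(\tilde v)|$ stays bounded away from zero there, so $|p'''\tilde v_\xi^2|/(p''|\tilde v_\xi|)\to\infty$ and no pointwise domination holds (your absorption would work for $\gamma$-law or any strictly convex pressure, but not in the generality claimed). Smallness fails as well: this term is linear, not quadratic, in $\Phi_\xi$, so pulling out $\|\Psi_{\xi\xi}\|_{L^\infty}\le C\delta$ or $\|\Psi_{\xi\xi}\|\le\delta$ leaves $C\delta\int_0^t\|\Phi_\xi\|\,d\tau$, while only $\int_0^t\|\Phi_\xi\|^2d\tau$ is controlled; the resulting bound $C\delta\sqrt{t}$ destroys the $T$-independence you need. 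Young's inequality simply re-creates the fatal $\int_0^t\!\int\Psi_{\xi\xi}^2$ that you correctly identified as inadmissible.

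The missing ingredient is Lemma \ref{le3.4}, which your proposal never invokes: integrate the offending term by parts once more to trade $\Psi_{\xi\xi}$ for $\Psi_\xi$, bound it by $C\int_0^t(\|\Phi_\xi\|^2+\|\Phi_{\xi\xi}\|^2+\|\Psi_\xi\|^2)\,d\tau$, and then use the space-time estimate $\int_0^t\|\Psi_\xi\|^2d\tau\le C\|(\Phi_0,\Psi_0)\|_1^2$ of Lemma \ref{le3.4}. This is exactly why the paper establishes that lemma \emph{before} the present one (see the remark preceding it: since $|\tilde v_\xi|$ may have no lower bound, one needs the $\Psi_\xi$ estimate). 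With that single repair your argument closes; without it, it does not.
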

\begin{proof}
Firstly, differentiating $\eqref{2.3}_1$ with respect to $\xi$ twice, then multiplying the result by $\Phi_{\xi\xi}$; secondly, multiplying $\eqref{2.3}_2$ by $-\frac { 1 } { p'( \tilde { v } ) }$, differentiating the result with respect to $\xi$ twice and multiplying the result by $\Psi_{\xi\xi}$; at last we sum the two results together to obtain that
 \begin{equation}\label{3.20}
   \begin{aligned}
     &\left(\frac{\Phi_{\xi\xi}^2}{2}-\frac { \Psi_{\xi\xi} ^ { 2 } } { 2p ^ { \prime } ( \tilde { v } ) }\right)_{t}
     +g'(\tilde{v})\Phi_{\xi\xi\xi}^2-\frac{s}{2}\left(\frac{1}{p ' ( \tilde { v } )}\right)_{\xi}\Psi_{\xi\xi}^2\\
      &+\left(-\frac{s}{2}\Phi_{\xi\xi}^2-(g'(\tilde{v})\Phi_{\xi\xi})_{\xi}\Phi_{\xi}-\Phi_{\xi\xi}\Psi_{\xi\xi}+\frac{s}{2p'( \tilde { v } )}\Psi_{\xi\xi}^2\right)_{\xi}\\
      =&G_{\xi\xi}\Phi_{\xi\xi}+g'(\tilde { v })_{\xi\xi\xi}\Phi_{\xi}\Phi_{\xi\xi}+2g'(\tilde { v })_{\xi\xi}\Phi_{\xi\xi}^2
      +\frac{p(v|\tilde{v})_{\xi\xi}}{p'(\tilde{v})}\Psi_{\xi\xi}\\
      &-2\left(\frac{1}{p ' ( \tilde { v } )}\right)_{\xi}(p'(\tilde{v})\Phi_{\xi})_{\xi}\Psi_{\xi\xi}
      -\left(\frac{1}{p ' ( \tilde { v } )}\right)_{\xi\xi}p'(\tilde{v})\Phi_{\xi}\Psi_{\xi\xi}.
    \end{aligned}
 \end{equation}
 Integrate \eqref{3.20} with respect to $t$ and $\xi$ over $(0,t)\times \mathbb{R}$, we have
 \begin{equation}\label{3.21}
   \begin{aligned}
     &\int \left(\frac{\Phi_{\xi\xi}^2}{2}-\frac { \Psi_{\xi\xi} ^ { 2 } } { 2p ^ { \prime } ( \tilde { v } ) }\right)d\xi
   +\int_{0}^{t}\int \left[g'(\tilde{v})\Phi_{\xi\xi\xi}^2-\frac{s}{2}\left(\frac{1}{p ' ( \tilde { v } )}\right)_{\xi}\Psi_{\xi\xi}^2\right]d\xi d\tau\\
   =&\int \left(\frac{\Phi_{0,\xi\xi}^2}{2}-\frac { \Psi_{0,\xi\xi} ^ { 2 } } { 2p ^ { \prime } ( \tilde { v } ) }\right)d\xi
   +\underbrace{\int_{0}^{t}\int G_{\xi\xi}\Phi_{\xi\xi}d\xi d\tau}_{I_1}\\
   &+\underbrace{\int_{0}^{t}\int \left(g'(\tilde { v })_{\xi\xi\xi}\Phi_{\xi}\Phi_{\xi\xi}+2g'(\tilde { v })_{\xi\xi}\Phi_{\xi\xi}^2\right)d\xi d\tau}_{I_2}\\
   &+\underbrace{\int_{0}^{t}\int \frac{p(v|\tilde{v})_{\xi\xi}}{p'(\tilde{v})}\Psi_{\xi\xi}d\xi d\tau}_{I_3}
   \underbrace{-2\int_{0}^{t}\int \left(\frac{1}{p ' ( \tilde { v } )}\right)_{\xi}(p'(\tilde{v})\Phi_{\xi})_{\xi}\Psi_{\xi\xi}d\xi d\tau}_{I_4}\\
   &\underbrace{-\int_{0}^{t}\int \left(\frac{1}{p ' ( \tilde { v } )}\right)_{\xi\xi}p'(\tilde{v})\Phi_{\xi}\Psi_{\xi\xi}d\xi d\tau}_{I_5}.
   \end{aligned}
 \end{equation}

By making use of Lemma \ref{le3.1}, Sobolev inequality and the a priori assumption \eqref{3.2}, we have
 \begin{eqnarray*}
   |I_1|&=&\left|\int_{0}^{t}\int G_{\xi}\Phi_{\xi\xi\xi}d\xi d\tau\right|\\
     &\leq&C\int_{0}^{t}\int\left(\Phi_{\xi\xi}^{2}+|\Phi_{\xi}| |\Phi_{\xi\xi}|^2+|\Phi_{\xi}||\Phi_{\xi\xi\xi}|+\left|\Phi_{\xi}\right|^2\right)|\Phi_{\xi\xi\xi}|d\xi d\tau\\
     &\leq&C\int_{0}^{t}\|\Phi_{\xi\xi}\|_{L^{\infty}}\|\Phi_{\xi\xi}\|\|\Phi_{\xi\xi\xi}\|d\tau+C\delta\int_{0}^{t}(\|\Phi_{\xi}\|^2+\|\Phi_{\xi\xi\xi}\|^2)d\tau\\
     &\leq&C\int_{0}^{t}\|\Phi_{\xi\xi}\|^{\frac{3}{2}}\|\Phi_{\xi\xi\xi}\|^{\frac{3}{2}}d\tau+C\delta\int_{0}^{t}(\|\Phi_{\xi}\|^2+\|\Phi_{\xi\xi\xi}\|^2)d\tau\\
     &\leq&C\delta\int_{0}^{t}(\|\Phi_{\xi}\|^2+\|\Phi_{\xi\xi}\|^2+\|\Phi_{\xi\xi\xi}\|^2)d\tau,
 \end{eqnarray*}
 and 
  \begin{equation*}
  \begin{aligned}
  |I_3|\leq&C\int_{0}^{t}\int\left(|\Phi_{\xi\xi\xi}| |\Phi_{\xi}|+\Phi_{\xi}^{2}+\Phi_{\xi\xi}^{2}+|\Phi_{\xi}||\Phi_{\xi\xi}|^2 \right)|\Psi_{\xi\xi}|d\xi d\tau\\
  \leq&C\int_{0}^{t}\|\Phi_{\xi}\|_{L^{\infty}}(\|\Phi_{\xi\xi\xi}\|+\|\Phi_{\xi}\|)\|\Psi_{\xi\xi}\|d\tau
  +C\int_{0}^{t}\|\Phi_{\xi\xi}\|_{L^{\infty}}\|\Phi_{\xi\xi}\|\|\Psi_{\xi\xi}\|d\tau\\
  \leq&C\delta\int_{0}^{t}(\|\Phi_{\xi}\|+\|\Phi_{\xi\xi}\|)(\|\Phi_{\xi\xi\xi}\|+\|\Phi_{\xi}\|)d\tau
  +C\delta\int_{0}^{t}(\|\Phi_{\xi\xi}\|+\|\Phi_{\xi\xi\xi}\|)\|\Phi_{\xi\xi}\|d\tau\\
  \leq&C\delta\int_{0}^{t}(\|\Phi_{\xi}\|^2+\|\Phi_{\xi\xi}\|^2+\|\Phi_{\xi\xi\xi}\|^2)d\tau.
  \end{aligned}
  \end{equation*}
On the other hand, it is straightforward to imply that
 \begin{equation*}
   |I_2|\leq C\int_{0}^{t}(\|\Phi_{\xi}\|^2+\|\Phi_{\xi\xi}\|^2)d\tau,
 \end{equation*}
 \begin{equation*}
   \begin{aligned}
     |I_4|\leq&-\frac{s}{4}\int_{0}^{t}\int\left(\frac{1}{p ' ( \tilde { v } )}\right)_{\xi}\Psi_{\xi\xi}^2d\xi d\tau
     +C\int_{0}^{t}(\|\Phi_{\xi}\|^2+\|\Phi_{\xi\xi}\|^2)d\tau.
   \end{aligned}
 \end{equation*}
and
 \begin{equation*}
   \begin{aligned}
     I_5=&\int_{0}^{t}\int\left[\left(\frac{1}{p ' ( \tilde { v } )}\right)_{\xi\xi}p'(\tilde{v})\Phi_{\xi}\right]_{\xi}\Psi_{\xi}d\xi d\tau\\
     \leq&C\int_{0}^{t}(\|\Phi_{\xi}\|^2+\|\Phi_{\xi\xi}\|^2+\|\Psi_{\xi}\|^2)d\tau.
   \end{aligned}
 \end{equation*}
Plug the estimates of $I_1-I_5$ into \eqref{3.21}, choose $\delta$ small enough and make use of the Lemma \ref{le3.3}-\ref{le3.4}, we can obtain  \eqref{3.19}. Thus  the proof of Lemma \ref{le3.5} is completed.
\end{proof}

Finally we estimate $\int_{0}^{t}\|\Psi_{\xi\xi}\|^2d\tau$.
\begin{lemma}\label{le3.6}
  Under the assumptions of proposition \ref{prop2.1}, it holds that
  \begin{equation}\label{3.22}
    \int_{0}^{t}\left\|\Psi_{\xi\xi}(\tau)\right\|^{2}d\tau \leq C\left\| \left( \Phi_{0},\Psi_{0} \right) \right\|_{2}^{2}.
  \end{equation}
\end{lemma}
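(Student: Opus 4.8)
The plan is to repeat the scheme of Lemma \ref{le3.4} one derivative higher. First I would differentiate $\eqref{2.3}_1$ once in $\xi$ and multiply the result by $\Psi_{\xi\xi}$, so that the term $-\Psi_{\xi\xi}$ produced by the differentiated left-hand side yields $\Psi_{\xi\xi}^2$ by itself:
\begin{equation*}
\Psi_{\xi\xi}^2 = \Phi_{\xi t}\Psi_{\xi\xi} - s\Phi_{\xi\xi}\Psi_{\xi\xi} - \bigl(g'(\tilde v)\Phi_{\xi\xi}\bigr)_\xi\Psi_{\xi\xi} - \bigl(g''(\tilde v)\tilde v_\xi\Phi_\xi\bigr)_\xi\Psi_{\xi\xi} - G_\xi\Psi_{\xi\xi}.
\end{equation*}
Integrating over $\mathbb{R}$ in $\xi$, the goal is to bound $\int\Psi_{\xi\xi}^2\,d\xi$ pointwise in $t$ by a total $t$-derivative plus quantities already controlled in Lemmas \ref{le3.3}--\ref{le3.5}.

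The decisive step is the treatment of $\Phi_{\xi t}\Psi_{\xi\xi}$. I would write $\Phi_{\xi t}\Psi_{\xi\xi} = (\Phi_\xi\Psi_{\xi\xi})_t - \Phi_\xi\Psi_{\xi\xi t}$ and replace $\Psi_t$ by $s\Psi_\xi - (p(v)-p(\tilde v))$, which is exactly $\eqref{2.3}_2$ after using $p(v|\tilde v)+p'(\tilde v)\Phi_\xi = p(v)-p(\tilde v)$. Differentiating this twice in $\xi$ gives $\Psi_{\xi\xi t} = s\Psi_{\xi\xi\xi} - (p(v)-p(\tilde v))_{\xi\xi}$, and after one spatial integration by parts the contribution $-s\int\Phi_\xi\Psi_{\xi\xi\xi}\,d\xi = s\int\Phi_{\xi\xi}\Psi_{\xi\xi}\,d\xi$ cancels the $-s\int\Phi_{\xi\xi}\Psi_{\xi\xi}\,d\xi$ coming from the second term above; this cancellation is the analogue of the one highlighted at \eqref{3.6} and is what keeps the estimate closed. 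The remaining piece $\int\Phi_\xi(p(v)-p(\tilde v))_{\xi\xi}\,d\xi$ I would integrate by parts once and use $(p(v)-p(\tilde v))_\xi = p'(v)\Phi_{\xi\xi}+(p'(v)-p'(\tilde v))\tilde v_\xi$ together with $p'(v)<0$ and $|p'(v)-p'(\tilde v)|\le C|\Phi_\xi|$ to dominate it by $C(\|\Phi_\xi\|^2+\|\Phi_{\xi\xi}\|^2)$.

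It then remains to absorb the three explicit products. Expanding $\bigl(g'(\tilde v)\Phi_{\xi\xi}\bigr)_\xi = g'(\tilde v)\Phi_{\xi\xi\xi}+g''(\tilde v)\tilde v_\xi\Phi_{\xi\xi}$ and invoking Lemma \ref{le3.1} for $G_\xi$, every term is bounded by $\varepsilon\Psi_{\xi\xi}^2$ plus multiples of $\Phi_\xi^2$, $\Phi_{\xi\xi}^2$ and $\Phi_{\xi\xi\xi}^2$, the $\varepsilon\Psi_{\xi\xi}^2$ contributions being absorbed into the left-hand side once $\varepsilon$ is small. I expect the genuine obstacle to be the appearance of the third derivative $\Phi_{\xi\xi\xi}$ (through $\bigl(g'(\tilde v)\Phi_{\xi\xi}\bigr)_\xi$ and through the $|\Phi_\xi||\Phi_{\xi\xi\xi}|$ term in the bound for $G_\xi$), since it is one order higher than the quantity being estimated; it carries no favorable sign and is handled purely by Young's inequality, throwing it onto $C\|\Phi_{\xi\xi\xi}\|^2$, whose time integral is already controlled by Lemma \ref{le3.5}. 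Finally I would integrate in $t$ over $(0,t)$: the boundary term $\int\Phi_\xi\Psi_{\xi\xi}\,d\xi$ at both endpoints is bounded by $\|\Phi_\xi\|^2+\|\Psi_{\xi\xi}\|^2\le C\|(\Phi_0,\Psi_0)\|_2^2$ via Lemmas \ref{le3.3} and \ref{le3.5}, while $\int_0^t(\|\Phi_\xi\|^2+\|\Phi_{\xi\xi}\|^2+\|\Phi_{\xi\xi\xi}\|^2)\,d\tau\le C\|(\Phi_0,\Psi_0)\|_2^2$ follows from Lemmas \ref{le3.3}--\ref{le3.5}, yielding \eqref{3.22}.
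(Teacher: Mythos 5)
Your proposal is correct and is essentially the paper's own proof: the paper multiplies $\eqref{2.3}_1$ by $\Psi_{\xi\xi\xi}$ rather than differentiating it once and multiplying by $\Psi_{\xi\xi}$, but after integration in $\xi$ these differ only by parts, and the key steps coincide — substituting $\Psi_{\xi\xi t}=s\Psi_{\xi\xi\xi}-(p(v)-p(\tilde v))_{\xi\xi}$ from $\eqref{2.3}_2$, the cancellation of the $s\int\Phi_{\xi\xi}\Psi_{\xi\xi}\,d\xi$ terms, the time-boundary term $\int\Phi_\xi\Psi_{\xi\xi}\,d\xi$, and absorption of the remaining products into $\varepsilon\|\Psi_{\xi\xi}\|^2$ plus quantities controlled by Lemmas \ref{le3.3}--\ref{le3.5}. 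The only (harmless) deviation is your treatment of the pressure term by a single integration by parts, which bounds it via $\|\Phi_\xi\|^2+\|\Phi_{\xi\xi}\|^2$ instead of the paper's $\|\Phi_\xi\|^2+\|\Phi_{\xi\xi\xi}\|^2$.
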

\begin{proof}
   Multiplying $\eqref{2.3}_1$ by $\Psi_{\xi\xi\xi}$ and using  $\eqref{2.3}_2$, we have
   \begin{equation}\label{3.23}
     \begin{aligned}
       \Psi_{\xi\xi}^2=&(\Phi_{\xi}\Psi_{\xi\xi})_t+\left(-\Phi_t\Psi_{\xi\xi}+\Psi_{\xi}\Psi_{\xi\xi}+(g'(\tilde{v})\Phi_{\xi})_{\xi}\Psi_{\xi\xi}\right)_{\xi}\\
       &-(p(v)-p(\tilde{v}))_{\xi\xi}\Phi_{\xi}-(g'(\tilde{v})\Phi_{\xi})_{\xi\xi}\Psi_{\xi\xi}+G\Psi_{\xi\xi\xi}.
     \end{aligned}
   \end{equation}
Integrate the above equation with respect to $t$ and $\xi$ over $(0,t)\times \mathbb{R}$, we obtain that 
   \begin{equation}\label{3.24}
     \begin{aligned}
       \int_{0}^{t}\left\|\Psi_{\xi\xi}(\tau)\right\|^{2}d\tau=&\underbrace{\int\left(\Phi_{\xi}\Psi_{\xi\xi}-\Phi_{0,\xi}\Psi_{0,\xi\xi}\right)d\xi}_{I_6}
     \underbrace{-\int_{0}^{t}\int (p(v)-p(\tilde{v}))_{\xi\xi}\Phi_{\xi}  d\xi d\tau}_{I_7}\\
     &\underbrace{-\int_{0}^{t}\int (g'(\tilde{v})\Phi_{\xi})_{\xi\xi}\Psi_{\xi\xi}  d\xi d\tau}_{I_8}
     +\underbrace{\int_{0}^{t}\int G\Psi_{\xi\xi\xi} d\xi d\tau}_{I_9}.
     \end{aligned}
   \end{equation}
It is straightforward to imply that
   \begin{equation*}
     \begin{aligned}
       |I_6|\leq &\|(\Phi_{\xi},\Psi_{\xi\xi})\|^2+\|(\Phi_{0,\xi},\Psi_{0,\xi\xi})\|^2,\\
       |I_8|\leq&\frac{1}{4}\int_{0}^{t}\left\|\Psi_{\xi\xi}(\tau)\right\|^{2}d\tau+C\int_{0}^{t}\|(\Phi_{\xi},\Phi_{\xi\xi},\Phi_{\xi\xi\xi})\|^2d\tau,
     \end{aligned}
   \end{equation*}
   and 
   \begin{equation*}
     \begin{aligned}
       |I_7|=&\left|\int_{0}^{t}\int (p(v)-p(\tilde{v}))\Phi_{\xi\xi\xi}  d\xi d\tau\right|\leq C\int_{0}^{t}\|(\Phi_{\xi},\Phi_{\xi\xi\xi})\|^2d\tau,\\
       |I_9|=&\left|\int_{0}^{t}\int G_\xi \Psi_{\xi\xi} d\xi d\tau\right|\\
       \leq&C\int_{0}^{t}\int\left(\Phi_{\xi\xi}^{2}+|\Phi_{\xi}| |\Phi_{\xi\xi}|^2+|\Phi_{\xi}| |\Phi_{\xi\xi\xi}|+\left|\Phi_{\xi}\right|^2\right)|\Psi_{\xi\xi}|d\xi d\tau\\
       \leq&C\int_{0}^{t}\|\Phi_{\xi\xi}\|_{L^{\infty}}\|\Phi_{\xi\xi}\|\|\Psi_{\xi\xi}\|d\tau
       +C\int_{0}^{t}\|\Phi_{\xi}\|_{L^{\infty}}(\|\Phi_{\xi}\|+\|\Phi_{\xi\xi\xi}\|)\|\Psi_{\xi\xi}\|d\tau\\
       \leq&C\delta\int_{0}^{t}(\|\Phi_{\xi\xi}\|+\|\Phi_{\xi\xi\xi}\|)\|\Phi_{\xi\xi}\|+(\|\Phi_{\xi}\|+\|\Phi_{\xi\xi}\|)(\|\Phi_{\xi}\|+\|\Phi_{\xi\xi\xi}\|)d\tau\\
       \leq&C\delta\int_{0}^{t}\|(\Phi_{\xi},\Phi_{\xi\xi},\Phi_{\xi\xi\xi})\|^2d\tau,
     \end{aligned}
   \end{equation*}
   plug the estimates of $I_6-I_9$ into \eqref{3.24}, combine with Lemma \ref{le3.5}, we can show \eqref{3.22}. Then the Lemma \ref{le3.6} is completed.
\end{proof}
Similarly we can get the higher order estimates and the proof is omitted for brevity.   
\begin{lemma}\label{le3.7}
	Under the assumptions of proposition \ref{prop2.1}, it holds that
	\begin{equation}\label{4.25}
	\left\| \left( \Phi_ { \xi \xi \xi} , \Psi_ { \xi \xi \xi} \right) ( t ) \right\| ^ { 2 }
	+ \int _ { 0 } ^ { t } \int [\Phi_ { \xi \xi \xi\xi } ^ { 2 } +\Psi_{\xi\xi\xi}^2 ]d \xi d \tau
	\leq C \left\| \left( \Phi_ { 0 } , \Psi_ { 0 } \right) \right\| _ { 3 } ^ { 2 }.
	\end{equation}
\end{lemma}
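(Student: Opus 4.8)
The plan is to mirror the two-step scheme already used for the lower-order estimates (Lemmas \ref{le3.5} and \ref{le3.6}), raising the derivative count by one.

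\emph{Step A (parabolic part, the analog of Lemma \ref{le3.5}).} First I would differentiate $\eqref{2.3}_1$ three times in $\xi$ and multiply by $\Phi_{\xi\xi\xi}$; then multiply $\eqref{2.3}_2$ by $-1/p'(\tilde v)$, differentiate three times in $\xi$, and multiply by $\Psi_{\xi\xi\xi}$; finally add the two and integrate over $(0,t)\times\mathbb{R}$. Exactly as in \eqref{3.20}, this produces the time derivative of $\frac12\Phi_{\xi\xi\xi}^2-\frac{1}{2p'(\tilde v)}\Psi_{\xi\xi\xi}^2$, the good dissipation $g'(\tilde v)\Phi_{\xi\xi\xi\xi}^2$, the nonnegative term $-\frac{s}{2}(1/p'(\tilde v))_\xi\Psi_{\xi\xi\xi}^2$, and a total $\xi$-derivative whose integral vanishes; the outcome is
\[
\|(\Phi_{\xi\xi\xi},\Psi_{\xi\xi\xi})(t)\|^2+\int_0^t\!\!\int g'(\tilde v)\Phi_{\xi\xi\xi\xi}^2\,d\xi d\tau\le C\|(\Phi_0,\Psi_0)\|_3^2+R,
\]
where $R$ collects $\int_0^t\!\int G_{\xi\xi\xi}\Phi_{\xi\xi\xi}$, $\int_0^t\!\int \frac{p(v|\tilde v)_{\xi\xi\xi}}{p'(\tilde v)}\Psi_{\xi\xi\xi}$, and terms carrying derivatives of $\tilde v$, the last being controlled by the already-established Lemmas \ref{le3.3}--\ref{le3.6}.

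\emph{Step B (hyperbolic part, the analog of Lemma \ref{le3.6}).} Since $|\tilde v_\xi|$ has no lower bound, the term $\int_0^t\|\Psi_{\xi\xi\xi}\|^2$ must be recovered separately. I would differentiate $\eqref{2.3}_1$ once in $\xi$, multiply by $\Psi_{\xi\xi\xi\xi}$, and use $\eqref{2.3}_2$ to convert $\Phi_{t\xi}\Psi_{\xi\xi\xi\xi}$ into a $t$-derivative plus lower-order corrections. The crucial term $-\Psi_{\xi\xi}\Psi_{\xi\xi\xi\xi}$ integrates by parts to $+\Psi_{\xi\xi\xi}^2$, yielding
\[
\int_0^t\|\Psi_{\xi\xi\xi}\|^2\,d\tau\le C\|(\Phi_0,\Psi_0)\|_3^2+C\int_0^t\|(\Phi_\xi,\Phi_{\xi\xi},\Phi_{\xi\xi\xi},\Phi_{\xi\xi\xi\xi})\|^2\,d\tau,
\]
in which all $\Phi$-derivatives up to third order are bounded by Lemmas \ref{le3.3}--\ref{le3.6} and $\int_0^t\|\Phi_{\xi\xi\xi\xi}\|^2$ by Step A.

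The main obstacle will be the top-order commutator term $\int_0^t\!\int G_{\xi\xi\xi}\Phi_{\xi\xi\xi}\,d\xi d\tau$ in Step A. Because the highest-derivative part of $G$ is $(g'(v)-g'(\tilde v))\Phi_{\xi\xi}$, one integration by parts gives $\int G_{\xi\xi\xi}\Phi_{\xi\xi\xi}=-\int G_{\xi\xi}\Phi_{\xi\xi\xi\xi}$, whose worst piece $-\int (g'(v)-g'(\tilde v))\Phi_{\xi\xi\xi\xi}^2$ is bounded by $C\delta\int\Phi_{\xi\xi\xi\xi}^2$ via $|g'(v)-g'(\tilde v)|\le C|\Phi_\xi|\le C\delta$, hence absorbed by the dissipation $\int g'(\tilde v)\Phi_{\xi\xi\xi\xi}^2$; the remaining pieces of $G_{\xi\xi}$ carry at most $\Phi_{\xi\xi\xi}$ and are handled by Cauchy--Schwarz with a small parameter, precisely as in the estimate of $I_1$. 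The same mechanism applies to $p(v|\tilde v)_{\xi\xi\xi}$, whose top-order part $(p'(v)-p'(\tilde v))\Phi_{\xi\xi\xi\xi}$ contributes, against $\Psi_{\xi\xi\xi}$, a term bounded by $C\delta\int(\Phi_{\xi\xi\xi\xi}^2+\Psi_{\xi\xi\xi}^2)$. The piece $C\delta\int\Psi_{\xi\xi\xi}^2$ couples Steps A and B; I would close this exactly as in the lower-order case by multiplying the Step A inequality by a large constant, adding Step B, and choosing $\delta$ small, so that $C\delta\int\Psi_{\xi\xi\xi}^2$ is absorbed by the genuine dissipation produced in Step B and $C\delta\int\Phi_{\xi\xi\xi\xi}^2$ by that of Step A. Carrying this out requires only the third-order analogs of the bounds in Lemma \ref{le3.1} for $G_{\xi\xi}$ and $p(v|\tilde v)_{\xi\xi\xi}$, which follow from the same direct differentiation together with \eqref{3.2} and the boundedness of the derivatives of $\tilde v$.
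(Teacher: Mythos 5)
Your proposal is correct and is essentially the argument the paper has in mind: the paper omits the proof of Lemma \ref{le3.7} precisely because it is the routine repetition of the two-step scheme of Lemmas \ref{le3.5} and \ref{le3.6} at one higher derivative, which is exactly what your Steps A and B carry out (including the key points: integrating $\int G_{\xi\xi\xi}\Phi_{\xi\xi\xi}$ by parts and absorbing the top term $(g'(v)-g'(\tilde v))\Phi_{\xi\xi\xi\xi}$ via $|g'(v)-g'(\tilde v)|\le C|\Phi_\xi|\le C\delta$ into the dissipation, and recovering $\int_0^t\|\Psi_{\xi\xi\xi}\|^2\,d\tau$ from the hyperbolic equation since $\tilde v_\xi$ has no lower bound). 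Your closing of the $C\delta\int\Psi_{\xi\xi\xi}^2$ coupling by adding a large multiple of Step A to Step B is sound and mirrors the paper's own treatment in Lemma \ref{le3.3}, so the estimate \eqref{4.25} follows as claimed.
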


\begin{proof}[Proof of proposition \ref{prop2.1}]
Proposition  \ref{prop2.1} is directly proved from Lemma \ref{le3.3}-\ref{le3.7}.
\end{proof}

 \section{Proof of Theorem \ref{th1}}
 Now we turn to prove our main theorem. It is straightforward to imply  \eqref{1.18} from Theorem \ref{th3}.   We will follow the ideas of \cite{Matsumura-Nishihara-1985-JJAM} to show \eqref{1.19} with the help of the following useful lemma. 
 
\begin{lemma}[\cite{Matsumura-Nishihara-1985-JJAM}]\label{asymptotic}
Assume that the function $f(t)\ge 0\in L^1(0,+\infty)\cap BV(0,+\infty)$, then it holds that $f(t)\to 0$ as $t\to +\infty$.
\end{lemma}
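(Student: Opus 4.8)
The plan is to combine two facts: integrability forces $f$ to be small along some sequence going to infinity, while bounded variation prevents $f$ from oscillating back up to large values. Together these pin the full limit to zero.

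First I would record the consequence of integrability. Since $f\ge 0$ and $\int_0^\infty f\,dt<\infty$, one has $\liminf_{t\to\infty}f(t)=0$; otherwise there would exist $\varepsilon>0$ and $T$ with $f(t)\ge\varepsilon$ for all $t\ge T$, whence $\int_T^\infty f\,dt=+\infty$, a contradiction. In particular, for every $\varepsilon>0$ and every $T$ there is a point $t_0\ge T$ with $f(t_0)<\varepsilon$.

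Next I would exploit the bounded variation. Write $V(a,b)$ for the total variation of $f$ on $[a,b]$ and set $V:=V(0,\infty)<\infty$. Since $V(0,T)\uparrow V$ as $T\to\infty$, the tail variation satisfies $V(T,\infty)=V-V(0,T)\to 0$; so for any $\varepsilon>0$ I can fix $T_\varepsilon$ with $V(T_\varepsilon,\infty)<\varepsilon$. By the previous step choose $t_0\ge T_\varepsilon$ with $f(t_0)<\varepsilon$. Then for every $t\ge T_\varepsilon$ the definition of total variation gives $|f(t)-f(t_0)|\le V(T_\varepsilon,\infty)<\varepsilon$, hence $f(t)<f(t_0)+\varepsilon<2\varepsilon$. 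As $\varepsilon>0$ is arbitrary, this shows $f(t)\to 0$ as $t\to\infty$.

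There is no serious obstacle here; the statement is a soft analysis lemma, and the only point requiring care is the interplay between the two hypotheses. Integrability alone does not force a pointwise limit (one can build an $L^1$ function that spikes to $1$ on ever thinner intervals), and bounded variation alone only guarantees that $\lim_{t\to\infty}f(t)$ exists, not that it vanishes. It is precisely the combination---using the $BV$ bound to upgrade the subsequential limit coming from $L^1$ into a genuine limit---that yields the conclusion. An equivalent phrasing would first note that the $BV$ hypothesis makes $f$ Cauchy at infinity, so $L:=\lim_{t\to\infty}f(t)$ exists, and then observe $L=\liminf_{t\to\infty}f(t)=0$.
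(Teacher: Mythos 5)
Your proof is correct. The paper itself gives no proof of this lemma --- it is quoted directly from \cite{Matsumura-Nishihara-1985-JJAM} --- and your argument (integrability forces $\liminf_{t\to\infty}f(t)=0$, vanishing tail variation upgrades this to a full limit) is precisely the standard argument behind that citation, matching in particular how the paper applies the lemma with $f=\|\Phi_{\xi\xi}\|^2$, where the $BV$ hypothesis comes from $\int_0^\infty\bigl|\tfrac{d}{dt}\|\Phi_{\xi\xi}\|^2\bigr|\,d\tau\le C$.
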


Let's turn to the system \eqref{2.3}. Differentiating $\eqref{2.3}_1$ twice with respect to $\xi$, multiplying the resulting equation by $\Phi_{\xi\xi}$ and integrating it with respect to $\xi$ on $(-\infty,\infty)$, we have 
 \begin{equation}\label{5.1}
\left|\frac{d}{dt}(\|\Phi_{\xi\xi}\|^2)\right|
 \le C(\|\Phi_\xi\|^2_{2}+\|\Psi_{\xi\xi}\|^2)+|\int_{-\infty}^\infty G_\xi\Phi_{\xi\xi\xi}d\xi|\le C(\|\Phi_\xi\|^2_{2}+\|\Psi_{\xi\xi}\|^2),
 \end{equation}
which indicates from proposition \ref{prop2.1} that 
\begin{equation}\label{5.2}
\int_0^\infty \left|\frac{d}{dt}(\|\Phi_{\xi\xi}\|^2)\right|d\tau
\le C.
\end{equation}
This, together with proposition \ref{prop2.1}, gives  that $\|\Phi_{\xi\xi}\|^2\in L^1(0,+\infty)\cap BV(0,+\infty)$ and further implies that 
\begin{equation}\label{5.3}
\|\Phi_{\xi\xi}\|\to 0~ \mbox{as}~ t\to +\infty.
\end{equation}
The Sobolev inequality implies that 
\begin{equation}\label{5.4}
\begin{aligned}
&\|v-\tilde{v}\|_{L^\infty}(t)=\|\Phi_\xi\|_{L^\infty}(t)\le \sqrt{2}\|\Phi_\xi\|^\frac12\|\Phi_{\xi\xi}\|^\frac12\le C\|\Phi_{\xi\xi}\|^\frac12\to 0~ \mbox{as}~ t\to +\infty,\\
&\|v_\xi-\tilde{v}_\xi\|_{L^\infty}(t)=\|\Phi_{\xi\xi}\|_{L^\infty}(t)\le \sqrt{2}\|\Phi_{\xi\xi}\|^\frac12\|\Phi_{\xi\xi\xi}\|^\frac12\le C\|\Phi_{\xi\xi}\|^\frac12\to 0~ \mbox{as}~ t\to +\infty.
\end{aligned}
\end{equation}

Similarly we can obtain that 
\begin{equation}\label{5.5}
\int_0^\infty \left|\frac{d}{dt}(\|\Psi_{\xi\xi}\|^2)\right|d\tau
\le C,
\end{equation}
which gives that $\|\Psi_{\xi\xi}\|(t)\to 0$ as $t\to \infty$ and 
\begin{equation}\label{5.6}
\|h-\tilde{h}\|_{L^\infty}(t)\le C\|\Psi_{\xi\xi}\|^{\frac12}\to 0,~\mbox{as}~t\to \infty.
\end{equation}
Note that  \begin{equation}\label{5.7}
u-\tilde{u}=h-\tilde{h}+g(v)_\xi-g(\tilde{v})_\xi,
\end{equation} 
we obtain from \eqref{5.4} and \eqref{5.7} that 
\begin{equation}\label{5.8}
\|u-\tilde{u}\|_{L^\infty}(t)\le \|h-\tilde{h}\|_{L^\infty}(t)+C\left(\|v-\tilde{v}\|_{L^\infty}(t)+\|v_\xi-\tilde{v}_\xi\|_{L^\infty}(t)\right)\to 0, ~\mbox{as}~t\to \infty.
\end{equation}
Therefore the proof of Theorem \ref{th1} is completed.

\

\smallskip
\section*{Acknowledgments}
Lin He is partially supported by the Fundamental Research Funds for the Central Universities. Feimin Huang is partially supported by National Center for Mathematics and Interdisciplinary Sciences, AMSS, CAS and NSFC Grant No. 11371349 and 11688101. 

\bigskip

\end{document}